\numberwithin{equation}{section}
\newtheorem{prop}{Proposition}[section]
\newtheorem{thm}[prop]{Theorem}
\newtheorem{lem}[prop]{Lemma}
\newtheorem{rem}[prop]{Remark}
\title[Liouville Theorems for a class of Monge-Amp\`ere equations]{Liouville Theorems for a class of degenerate or singular Monge-Amp\`ere equations}
\author[L.~Wang \& B.~Zhou]{Ling Wang and Bin  Zhou}
\address{School of Mathematical Sciences, Peking
University, Beijing 100871, China.}
\email{lingwang@stu.pku.edu.cn}
\address{School of Mathematical Sciences, Peking
University, Beijing 100871, China.}
\email{bzhou@pku.edu.cn}
\thanks {This research is partially supported by  NSFC  grants 12271008 and National Key R$\&$D Program of China SQ2020YFA0712800.}
\begin{document}
\subjclass[2020]{35J96, 35J70, 35B53, 35A09}
\keywords{degenerate Monge-Amp\`ere equation, Liouville theorems, partial Legendre transform, method of moving spheres}

\maketitle

\begin{abstract}
	In this note, we classify solutions to a class of Monge-Amp\`ere equations whose right hand side may be degenerate or singular in the half space. 
	Solutions to these equations are special solutions to a class of fourth order equations, including the affine maximal hypersurface equation, in the half space.
Both the Dirichlet boundary value and Neumann boundary value cases are considered. 
		
\end{abstract}

\section{Introduction}

The main purpose of this paper is to investigate Liouville theorems for the following class of Monge-Amp\`ere equations
\begin{equation}\label{eq:MA-d-hf-n}
\det D^2u=(a+bx_n)^\alpha,\ \ \alpha\in \mathbb R
\end{equation} 
in the half space $\mathbb R^n_+:=\left\{(x',x_n)\in\mathbb R^n:x_n>0\right\}$, where $a\geq 0$ and $b>0$.  
A motivation to consider \eqref{eq:MA-d-hf-n} comes from the study of the
following class of fourth order equations
\begin{equation}\label{eq:4}
U^{ij} w_{ij}=0, 
\end{equation}
where $(U^{i j})$ denotes the cofactor matrix of $(u_{i j})$ and $w=(\det D^{2} u)^{-\theta}$, $\theta\in \mathbb R$ ($\theta\neq 0$). In particular, when $\theta=\frac{n+1}{n+2}$, it is the \textit{affine mean curvature equation} in affine geometry \cite{Ch} and when $\theta=1$, it is \textit{Abreu's equation} \cite{Ab}. A first breakthrough for the study of this class of equations is the Chern conjecture, also known as the {\it affine Bernstein theorem} now, solved by Trudinger-Wang \cite{TW1}, which says an entire strictly, uniformly convex solution to \eqref{eq:4} on $\mathbb R^2$ when $\theta=\frac{3}{4}$ must be a quadratic polynomial. Later, it is shown that the Bernstein theorem also holds when $\frac{3}{4}<\theta\leq 1$ \cite{JL, Z} and $\theta<0$ \cite{TW2}. When we consider \eqref{eq:4} in $\mathbb{R}_{+}^{n}$, one can easily find solutions where are not quadratic polynomials. In particular, solutions to \eqref{eq:MA-d-hf-n} with $\alpha=-\frac{1}{\theta}$
give a class of special solutions to \eqref{eq:4}.  Note that in the entire space case, \eqref{eq:MA-d-hf-n} admits no
convex solutions if $b\neq 0$.


When $a=0$, \eqref{eq:MA-d-hf-n} may be degenerate ($\alpha>0$) or singular ($\alpha<0$) on $\partial\mathbb R^n_+$. When $\alpha\geq 0$, 
Savin  \cite{S1, S2} proved that if the Dirichlet boundary value $u(x',0)=\frac{1}{2}|x'|^2$ is assigned, any convex continuous solution to  \eqref{eq:MA-d-hf-n} with the growth condition $u=O(|x|^{3+\alpha-\varepsilon})$ as $|x|\to+\infty$ must be the form of
$$u(Ax)=Bx_n+\frac{1}{2}|x'|^2+\frac{x_n^{2+\alpha}}{(2+\alpha)(1+\alpha)}$$
for some sliding $A$ along $x_n = 0$, and some constant $B$. In particular, when $\alpha=0$, the solution is a quadratic polynomial. 
This result was later extended to the singular case with $\alpha\in(-1,0)$ by Savin and Zhang \cite{SZ}. There are examples show that the growth condition at infinity is necessary in general dimensions. When $\alpha=-1$, the local  asymptotic behavior of the solution near the boundary in dimension  two was studied in \cite{Ru}.

In this paper, we concentrate on the two dimensional case.
Our first result classifies all solutions to \eqref{eq:MA-d-hf-n} with Dirichlet condition in dimension two 
 when $\alpha>-2$. 
\begin{thm}\label{thm:MA-d-hf}
Let $u(x,y)\in C^2({\mathbb R_+^2})\cap C(\overline{\mathbb R_+^2})$ be a convex solution to
\begin{equation}\label{eq:MA-d-hf}
\left\{
\begin{aligned}
\det D^2u&=(a+by)^\alpha &&\text{in }\mathbb R^2_+,\\[5pt]
u(x,0)&=\frac{1}{2}x^2 &&\text{on }\partial\mathbb R^2_+ ,
\end{aligned}
\right.
\end{equation}
where $a\geq 0$, $b>0$, and $\alpha>-2$. Then there exist $A,\,B,\,C\in\mathbb R$ with $A\geq 0$ such that
\begin{equation}\label{eq:s-MA}
u(x,y)=\left\{
\begin{aligned}
&\frac{(b-aA)(a+by)^{2+\alpha}}{b^3(1+\alpha)(2+\alpha)}+\frac{A(a+by)^{3+\alpha}}{b^3(2+\alpha)(3+\alpha)}-By\\
&\quad-\frac{(b-aA)a^{2+\alpha}}{b^3(1+\alpha)(2+\alpha)}-\frac{Aa^{3+\alpha}}{b^3(2+\alpha)(3+\alpha)}+\frac{(x-Cy)^2}{2(1+Ay)},&&\alpha\neq -1;\\[7pt]
&\frac{b-aA}{b^3}(a+by)\ln(a+by)+\frac{A}{2b}y^2-By\\
&\quad\quad\quad\quad\quad\quad\quad\quad\quad\,\,-\frac{(b-aA)a\ln a}{b^3}+\frac{(x-Cy)^2}{2(1+Ay)},&&\alpha= -1.
\end{aligned}
\right.
\end{equation}
\end{thm}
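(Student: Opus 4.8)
\section*{Proof proposal for Theorem \ref{thm:MA-d-hf}}

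The plan is to linearize \eqref{eq:MA-d-hf} by a partial Legendre transform in the $x$-variable, classify the solutions of the resulting linear equation, and transform back. Since $\det D^2u=(a+by)^\alpha>0$ throughout $\mathbb R^2_+$ (the base $a+by$ is positive there) and $u$ is convex, we have $u_{xx}>0$ in $\mathbb R^2_+$, and interior regularity for the Monge--Amp\`ere equation makes $u$ smooth there. I would introduce the change of variables $(x,y)\mapsto(s,t):=(u_x(x,y),y)$ and set $u^*(s,t):=xu_x-u$. The classical identities
\[
u^*_s=x,\quad u^*_t=-u_y,\quad u^*_{ss}=\frac1{u_{xx}},\quad u^*_{st}=-\frac{u_{xy}}{u_{xx}},\quad u^*_{tt}=-\frac{\det D^2u}{u_{xx}}
\]
turn \eqref{eq:MA-d-hf} into the linear equation
\begin{equation}\label{eq:plt-lin}
u^*_{tt}+(a+bt)^\alpha u^*_{ss}=0
\end{equation}
on the image domain $\Omega^*\subset\{t>0\}$; moreover $u^*$ is convex in $s$ (as $u^*_{ss}>0$) and, by \eqref{eq:plt-lin}, concave in $t$.

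Next I would analyze the geometry of $\Omega^*$ and the boundary trace of $u^*$: using $u\in C(\overline{\mathbb R^2_+})$, the convexity of $u$, and the Dirichlet datum $u(x,0)=\tfrac12x^2$, one shows that $u_x(\cdot,y)$ is onto $\mathbb R$ for each $y>0$, so that $\Omega^*=\mathbb R\times(0,\infty)$, and that $u^*$ extends continuously up to $\{t=0\}$ with $u^*(s,0)=\tfrac12 s^2$. This is where the hypotheses that the domain is the whole half-space and that the boundary value is precisely $\tfrac12x^2$ enter the argument.

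The heart of the matter is to show that $u^*$ is a polynomial of degree at most $2$ in $s$. Differentiating \eqref{eq:plt-lin} in $s$ repeatedly shows that $u^*_s$, $u^*_{ss}$, $u^*_{sss}$ all solve \eqref{eq:plt-lin}. Since $\alpha>-2$, the substitution $\tau=\tau(t)$ with $\tau'(t)=(a+bt)^{\alpha/2}$ is a legitimate, orientation-preserving change of variable on $(0,\infty)$, and with $V(s,\tau):=u^*(s,t)$ it recasts \eqref{eq:plt-lin} in the divergence form $\operatorname{div}(\tau^{\gamma}\nabla V)=0$ with $\gamma=\tfrac{\alpha}{\alpha+2}<1$; when $a=0$ this is genuinely degenerate ($\alpha>0$) or singular ($\alpha<0$) at the boundary $\{\tau=0\}$, whereas for $a>0$ the coefficient is smooth and nondegenerate there. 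I would then establish a Liouville theorem: a solution of this equation on the half-plane $\{\tau>\tau_0\}$ that vanishes on $\{\tau=\tau_0\}$ and inherits from the convexity of $u$ the appropriate control at infinity must vanish identically. Applied to $u^*_{sss}$, which vanishes on $\{t=0\}$ by the previous step, this yields $u^*_{sss}\equiv 0$, i.e. $u^*(s,t)=\tfrac12 p(t)s^2+q(t)s+r(t)$.

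It remains to identify $p,q,r$ and to invert the transform. Substituting into \eqref{eq:plt-lin} and matching powers of $s$ gives $p''=0$, $q''=0$, $r''=-(a+bt)^\alpha p$; the boundary trace forces $p(0)=1$, $q(0)=0$, $r(0)=0$, and positivity of $u^*_{ss}=p(t)=1+At$ on $(0,\infty)$ forces $A\ge 0$. Writing $q(t)=Ct$ and integrating $r''=-(a+bt)^\alpha(1+At)$ twice — the case $\alpha=-1$ producing a logarithm from $\int(a+bt)^{-1}\,dt$ — one recovers $r$ up to the integration constant encoded by $B$. Finally, inverting via $x=u^*_s=(1+Ay)s+Cy$ and $u=xs-u^*=\tfrac12(1+Ay)s^2-r(y)$, and substituting $s=(x-Cy)/(1+Ay)$, reproduces exactly \eqref{eq:s-MA}. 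The main obstacle is the Liouville theorem in the third paragraph: because \eqref{eq:MA-d-hf} carries no growth hypothesis, one cannot assume boundedness and must instead extract the needed growth/decay information from the convex--concave structure of $u^*$ (equivalently, from the global convexity of $u$), while handling the degenerate and singular ranges of the weight $\tau^\gamma$ — which is also what produces the $\alpha=-1$ dichotomy in the statement; the minimal-regularity bookkeeping of the second paragraph is the other delicate point.
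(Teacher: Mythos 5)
Your outline follows the paper's route: partial Legendre transform, reduction to a divergence-form linear equation with weight $\tau^{\gamma}$, $\gamma=\alpha/(\alpha+2)<1$, a Liouville theorem on a shifted half-plane, and inversion. The change of variables $\tau'(t)=(a+bt)^{\alpha/2}$ and the final bookkeeping for $p,q,r$ and the inversion agree with the paper. But the central Liouville step, as you have stated it, has a genuine gap.

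You propose to apply a Liouville theorem to $u^*_{sss}$, concluding that it vanishes identically because it vanishes on $\{t=0\}$ and ``inherits from the convexity of $u$ the appropriate control at infinity.'' Two problems. First, the one-sided bound the moving-sphere argument needs (the hypothesis $u>-C_0$ in the paper's Theorem 2.1, which feeds the Kelvin-transform comparison) is simply not available for $u^*_{sss}$: convexity gives $u^*_{ss}=1/u_{xx}>0$, but says nothing about the sign or size of $u^*_{sss}$. Second, even if you had such a bound, that Liouville theorem on $\{\tau>\tau_0\}$ with zero Dirichlet data does not conclude that the solution vanishes: for $\gamma<1$ (which covers all $\alpha>-2$) it produces the one-parameter family $C_*\bigl(\tau^{1-\gamma}-\tau_0^{1-\gamma}\bigr)$, $C_*\ge 0$, all of which vanish on the boundary and are bounded below but unbounded above. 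So applied to $u^*_{sss}$ you would at best get $u^*_{sss}=A\eta$ for some $A\ge 0$, not $u^*_{sss}\equiv 0$, and you would then need the extra observation that $A>0$ makes $u^*_{ss}$ change sign as $s\to-\infty$, contradicting $u^*_{ss}>0$, to finish.

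The direct route, and the one the paper takes, is to apply the Liouville theorem one $s$-derivative lower, to $v:=u^*_{ss}-1$. This vanishes on $\{t=0\}$ by the boundary trace $u^*(s,0)=\tfrac12 s^2$, and satisfies $v>-1$ globally from $u^*_{ss}>0$, which is exactly the one-sided bound the moving-sphere argument requires. After the change of variables and the extension-by-zero to the strip $(0,\tau_0]$ needed to handle $a>0$, the conclusion $v=A\eta$, $A\ge 0$, gives $u^*_{ss}=1+A\eta$ directly; $u^*_{sss}\equiv 0$ is then automatic, the nonnegativity of $A$ comes for free from $C_*\ge 0$, and your fourth-paragraph computation carries through without change.
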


\begin{rem}\quad
\begin{enumerate}
\item  When $a=0$, we improve the exponent in the results of \cite{S1, S2, SZ} to $\alpha>-2$ in two dimensional case. This exponent 
is sharp since \eqref{eq:MA-d-hf} admits no solutions continuous up to the boundary for $\alpha\leq -2$ (see details in Remark \ref{rem:sharp}).
When $\alpha=0$, Theorem \ref{thm:MA-d-hf} can be also found in \cite[Page 145-148]{Fi}.

\item If we assume $u=O(|(x,y)|^{3+\alpha-\varepsilon})$ as $|(x,y)|\to+\infty$, then we have that  $A$ must be $0$ in \eqref{eq:s-MA}. Hence we can recover some of the results in \cite{S2,SZ} in dimension two.

\item In a subsequent work, we are going to study the Liouville type theorem the following problem
\begin{equation}\label{eq:4th-eq-hf}
\left\{\begin{aligned}
U^{i j} w_{i j}&=0  &&\text { in } \mathbb{R}_{+}^{n}, \\
u&=\frac{1}{2}|x'|^2  &&\text { on } \partial \mathbb{R}_{+}^{n},\\
w&=1  &&\text { on } \partial \mathbb{R}_{+}^{n}.
\end{aligned}\right.
\end{equation}

\end{enumerate}
\end{rem}

The main idea to prove Theorem \ref{thm:MA-d-hf} is as follows. Let  $u(x, y)$ be a uniformly convex solution to \eqref{eq:MA-d-hf-n}.
Then its partial Legendre transform in the $x$-variable is
\begin{equation}
u^{\star}(\xi, \eta)=x u_{x}(x, y)-u(x, y),
\end{equation}
where $(\xi, \eta)=(u_{x}, y)$. It is easy to check that $u^\star$ is a solution to
\begin{equation}\label{eq:gru}
(a+b\eta)^{\alpha}u_{\xi \xi}^{\star}+u_{\eta\eta}^{\star}=0.
\end{equation}
When $a=0$, this Grushin type equation was studied in \cite{CS}. By a change of variables
$v(x_1,x_2)=u^\star\left(x_1,f(x_2)\right)$,
where
\[\xi=x_1, \ \eta=f(x_2)=b^{\frac{-\alpha}{\alpha+2}}\left(\frac{\alpha+2}{2}x_2\right)^{\frac{2}{\alpha+2}}-\frac{a}{b},\]
we know that $v$ solves
the following divergence type equation
\begin{equation}\label{linear-div}
\operatorname{div}\left(x_2^{\frac{\alpha}{\alpha+2}}\nabla v\right)=0,
\end{equation}
 which may be degenerate or singular. A Liouville theorem for \eqref{linear-div} on the upper half space has been obtained recently by \cite{WZ}. However, 
 in our case, the domain may shift after the transformations. Hence, we need a generalization of the result in \cite{WZ} to general upper half spaces.

The above approach also works for the case of Neumann problem. As for the Neumann boundary value case, we only consider the degenerate case, stated as follows.

\begin{thm}\label{thm:Liou-d-hf-ne}
Let $u(x,y)\in C^2({\mathbb R_+^2})\cap C^1(\overline{\mathbb R_+^2})$ be a convex solution to
\begin{equation}\label{eq:MA-d-hf-ne}
\left\{
\begin{aligned}
\det D^2u&=y^\alpha &&\text{in }\mathbb R^2_+,\\
u_y(x,0)&=0 &&\text{on }\partial\mathbb R^2_+ ,
\end{aligned}
\right.
\end{equation}
where $\alpha\geq 0$. Then there exist a constant $A>0$, and a linear function $l(x)$ such that
\begin{equation}\label{eq:s-MA-d-hf-ne}
u(x,y)=\frac{1}{2A}x^2+\frac{A}{(2+\alpha)(1+\alpha)}y^{2+\alpha}+l(x).
\end{equation}
\end{thm}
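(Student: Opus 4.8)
The plan is to follow the scheme outlined after Theorem~\ref{thm:MA-d-hf}, adapting it to the Neumann datum. Since $\det D^2u=y^{\alpha}>0$ in $\mathbb R^2_+$ and $u$ is convex, $D^2u>0$ there, so $u$ is smooth and locally uniformly convex in the interior and, for each fixed $y>0$, the map $x\mapsto u_x(x,y)$ is strictly increasing. I would first establish that this map is \emph{onto} $\mathbb R$ for every $y>0$, so that the partial Legendre transform
\[
u^{\star}(\xi,\eta)=x\,u_x(x,y)-u(x,y),\qquad (\xi,\eta)=(u_x(x,y),\,y),
\]
is a diffeomorphism of $\mathbb R^2_+$ onto $\mathbb R^2_+$. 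As recalled in the introduction, $u^{\star}$ solves the Grushin equation $\eta^{\alpha}u^{\star}_{\xi\xi}+u^{\star}_{\eta\eta}=0$, and since $u^{\star}_{\xi}=x$ and $u^{\star}_{\eta}=-u_y$, the Neumann datum $u_y(x,0)=0$ becomes the Neumann datum $u^{\star}_{\eta}(\xi,0)=0$. With $\eta=f(x_2)=\big(\tfrac{\alpha+2}{2}x_2\big)^{2/(\alpha+2)}$ and $v(x_1,x_2)=u^{\star}(x_1,f(x_2))$, the function $v$ solves
\[
\operatorname{div}\!\big(x_2^{\sigma}\nabla v\big)=0\quad\text{in }\mathbb R^2_+,\qquad \sigma:=\tfrac{\alpha}{\alpha+2}\in[0,1),
\]
together with the conormal Neumann condition $\lim_{x_2\to0^{+}}x_2^{\sigma}v_{x_2}=0$ (equivalent to $u^{\star}_{\eta}(\cdot,0)=0$, since $x_2^{\sigma}f'(x_2)$ is constant).

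The heart of the argument is the following observation. Because the weight $x_2^{\sigma}$ is independent of $x_1$, differentiating the divergence equation twice in $x_1$ shows that
\[
w:=v_{x_1x_1}=u^{\star}_{\xi\xi}=\frac{1}{u_{xx}}>0
\]
solves the \emph{same} equation $\operatorname{div}(x_2^{\sigma}\nabla w)=0$ in $\mathbb R^2_+$, and differentiating the boundary condition gives $\lim_{x_2\to0^{+}}x_2^{\sigma}w_{x_2}=0$. Thus $w$ is a positive solution of \eqref{linear-div} in the half plane with the conormal Neumann condition; by the Liouville theorem for \eqref{linear-div} (the result of \cite{WZ}, or its half-space extension discussed in the introduction; equivalently, reflect $w$ evenly across $\{x_2=0\}$, note that $|x_2|^{\sigma}$ is an $A_2$ weight, and invoke the scale-invariant Harnack inequality for degenerate elliptic operators with $A_2$ weights) we conclude $w\equiv c$ for a constant $c>0$, i.e.\ $u_{xx}\equiv 1/c$.

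Once $w\equiv c$, write $v(x_1,x_2)=\tfrac{c}{2}x_1^{2}+p(x_2)x_1+q(x_2)$. Substituting into the divergence equation gives $(x_2^{\sigma}p')'=0$ and $(x_2^{\sigma}q')'=-c\,x_2^{\sigma}$, and the conormal Neumann condition forces $p\equiv p_0$ and $x_2^{\sigma}q'=-\tfrac{c}{1+\sigma}x_2^{1+\sigma}$, hence $v=\tfrac{c}{2}x_1^{2}+p_0x_1+q_0-\tfrac{c}{2(1+\sigma)}x_2^{2}$. Undoing the change of variables ($x_2^{2}=\tfrac{4}{(\alpha+2)^2}\eta^{\alpha+2}$ and $1+\sigma=\tfrac{2(\alpha+1)}{\alpha+2}$) yields
\[
u^{\star}(\xi,\eta)=\tfrac{c}{2}\xi^{2}+p_0\xi+q_0-\tfrac{c}{(\alpha+1)(\alpha+2)}\eta^{\alpha+2},
\]
and inverting the partial Legendre transform --- from $x=u^{\star}_{\xi}=c\xi+p_0$ one gets $u_x=(x-p_0)/c$, so $u=x\,u_x-u^{\star}$ --- produces
\[
u(x,y)=\tfrac{1}{2c}(x-p_0)^{2}+\tfrac{c}{(\alpha+1)(\alpha+2)}y^{\alpha+2}-q_0,
\]
which is exactly \eqref{eq:s-MA-d-hf-ne} with $A=c>0$ and $l(x)=-\tfrac{p_0}{c}x+\tfrac{p_0^{2}}{2c}-q_0$.

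I expect the main obstacle to be the very first step: showing that $x\mapsto u_x(x,y)$ is onto $\mathbb R$ for every $y>0$, equivalently that the image of the partial Legendre transform is all of $\mathbb R^2_+$ rather than a vertical strip. In the Dirichlet case of Theorem~\ref{thm:MA-d-hf} this is immediate from $u_x(x,0)=x$, but here there is no boundary datum to pin it down, while the reflection/Liouville argument for $w$ genuinely needs the half-plane domain. I would handle it by reflecting $u$ evenly across $\{y=0\}$ --- which is licit since $u\in C^1(\overline{\mathbb R^2_+})$ with $u_y(\cdot,0)=0$, and a $C^1$ function that is convex on each side of a line is convex --- to a global convex solution $\tilde u$ of $\det D^2\tilde u=|y|^{\alpha}$ on $\mathbb R^2$, and then arguing that a bounded $x$-slope would, through $u_{xx}u_{yy}-u_{xy}^2=|y|^{\alpha}$, force growth of $u_{yy}$ incompatible with the convexity of $\tilde u$ up to and across $\{y=0\}$ (alternatively, via a blow-down/rigidity argument identifying the tangent cone at infinity). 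The other ingredient borrowed from earlier in the paper is precisely the Liouville theorem for \eqref{linear-div} on half spaces, applied above to the positive solution $w$.
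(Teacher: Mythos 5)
Your proposal follows essentially the same route as the paper's proof: partial Legendre transform to the Grushin equation with Neumann datum, twice differentiation in $\xi$ to obtain a positive solution of the same equation, the change of variables $\eta\mapsto x_2$ casting it as $\operatorname{div}(x_2^{\sigma}\nabla\,\cdot\,)=0$ with conormal boundary condition, the Liouville theorem of \cite{WZ} (Theorem~\ref{thm:liou-div-hf-ne}) to conclude $u^{\star}_{\xi\xi}\equiv A$, and then the ODEs and the inverse transform. The only cosmetic difference is that the paper differentiates before changing variables and you after, which commutes since the substitution affects only the second coordinate; your ODE bookkeeping (via $p,q$) is a minor reorganization of the paper's $h_1,h_2$.

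What you add, and what is worth emphasizing, is that you explicitly flag the surjectivity of $\mathcal{P}:\mathbb{R}^2_+\to\mathbb{R}^2_+$ as a point requiring justification. The paper's proof of Theorem~\ref{thm:Liou-d-hf-ne} simply says ``similarly as in the last section'' and does not supply an argument that the image of the partial Legendre transform is all of $\mathbb{R}^2_+$ (in the Dirichlet case the paper deduces this tersely from $\mathcal{P}(\{y=0\})=\{\eta=0\}$; in the Neumann case there is not even a boundary datum for $u_x$). Your concern is legitimate, and the even-reflection device you propose is exactly the right repair. In fact one can push it further: the reflection $\tilde u(x,y):=u(x,|y|)$ is, because $u\in C^1(\overline{\mathbb{R}^2_+})$ with $u_y(\cdot,0)=0$, a $C^1$ convex function on $\mathbb{R}^2$ and an Alexandrov solution of $\det D^2\tilde u=|y|^{\alpha}$ (the line $\{y=0\}$ carries no Monge--Amp\`ere mass since $\nabla\tilde u(\{y=0\})$ lies in a line). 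Then Theorem~\ref{thm:MA-d} applies directly, and evenness of $\tilde u$ in $y$ forces $B=0$ and kills the $y$-part of the linear term in \eqref{eq:s-MA-d}, yielding exactly \eqref{eq:s-MA-d-hf-ne}. This not only settles the surjectivity issue without further work but also bypasses the half-space Liouville theorem entirely, and moreover gives the conclusion for the full range $\alpha>-1$ -- which is stronger than the $\alpha\geq 0$ hypothesis in Theorem~\ref{thm:Liou-d-hf-ne} and answers the question raised in the remark following it.
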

\begin{rem}\quad
\begin{enumerate}
\item When $\alpha=0$, Theorem \ref{thm:Liou-d-hf-ne} is included in \cite[Theorem 1.1]{JT}. In fact, it is proved in \cite{JT} that any convex solution to Neumann problem of Monge-Amp\`ere equations in the half plane must be a quadratic polynomial for two dimensional case, and the conclusion still holds for dimension $n \geq 3$ if either the boundary value is zero or the solution restricted on some $n-2$ dimensional subspace is bounded from above by a quadratic function. Here we extend this to the degenerate case.

\item It is still unknown whether Theorem \ref{thm:Liou-d-hf-ne} is true for $\alpha<0$. Although our method doesn't work for this case, we believe the conclusion is true for $\alpha>-1$.
\end{enumerate}
\end{rem}

Finally, we turn to the Liouville theorem on the whole space.
The celebrated result of J\"orgens \cite{Jo}, Calabi \cite{Ca} and Pogorelov \cite{Po} states that any entire classical convex solution to the Monge-Amp\`ere equation 
$$\det D^2u=1\quad\text{in }\mathbb R^n$$ 
must be a quadratic polynomial. Caffarelli \cite{Caf} extended this result to viscosity solutions (the proof can be also found in \cite[Theorem 1.1]{CL}). For another direction of extension, Jin and Xiong \cite{JX} studied the class of equations 
\begin{equation}\label{eq:MA-d}
\det D^2u(x,y)=|y|^{\alpha}
\end{equation}
 on the whole plane
$\mathbb R^2$, and established a Liouville theorem.
\begin{thm}[{\cite[Theorem 1.1]{JX}}]\label{thm:MA-d}
Let $u(x,y)$ be convex generalized (or Alexandrov) solution to \eqref{eq:MA-d}
with $\alpha>-1$. Then there exist some constants $A>0$, $B\in\mathbb R$ and a linear function $l(x,y)$ such that
\begin{equation}\label{eq:s-MA-d}
u(x,y)=\frac{1}{2A}x^2+\frac{AB^2}{2}y^2+Bxy+\frac{A}{(2+\alpha)(1+\alpha)}|y|^{2+\alpha}+l(x,y).
\end{equation}
\end{thm}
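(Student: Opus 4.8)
\medskip

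\noindent\emph{Proof strategy.} I would adapt to the whole plane the partial Legendre transform scheme used above for the half-space problems, replacing the half-space Liouville theorem of \cite{WZ} by a scale-invariant whole-space version. Off the line $\{y=0\}$ the right-hand side $|y|^\alpha$ is smooth and positive, so by the interior regularity theory for Monge-Amp\`ere equations (see, e.g., \cite{Caf}) $u$ is a smooth, strictly convex classical solution in $\mathbb R^2\setminus\{y=0\}$; the behaviour of $u$ up to $\{y=0\}$ --- in particular that $u_{xx}$ stays bounded and positive there --- would be imported from the known (weighted) boundary regularity for $\det D^2u=|y|^\alpha$, and is what guarantees that the transformed functions below are genuine weak solutions across the degeneracy line.

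Take the partial Legendre transform in $x$: with $(\xi,\eta)=(u_x,y)$ and $v(\xi,\eta)=xu_x-u$, the function $v$ solves the whole-plane analogue $|\eta|^\alpha v_{\xi\xi}+v_{\eta\eta}=0$ of \eqref{eq:gru} (the case $a=0$, $b=1$, with $\eta$ replaced by $|\eta|$) on the domain $\Omega:=\{(\xi,\eta):\xi\in u_x(\mathbb R,\eta)\}$, with $v_{\xi\xi}=1/u_{xx}>0$ and $v_{\eta\eta}=-\det D^2u/u_{xx}<0$ there. I expect the main obstacle to be showing $\Omega=\mathbb R^2$, i.e. that for every $\eta$ the convex slice $x\mapsto u(x,\eta)$ has gradient range all of $\mathbb R$. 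This is where the global nature of the solution must enter, just as in the J\"orgens-Calabi-Pogorelov theorem: the image $\nabla u(\mathbb R^2)$ is a convex set whose area equals the (infinite) total Monge-Amp\`ere mass $\int_{\mathbb R^2}|y|^\alpha$, hence is unbounded, and a barrier/comparison argument exploiting $\det D^2u=|y|^\alpha$ on large boxes should upgrade this to $\nabla u(\mathbb R^2)=\mathbb R^2$ and thus $\Omega=\mathbb R^2$.

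Granting $\Omega=\mathbb R^2$, the change of variables $\eta=\operatorname{sgn}(\zeta)\bigl(\tfrac{\alpha+2}{2}|\zeta|\bigr)^{2/(\alpha+2)}$ turns $v$ into an entire solution of the whole-plane analogue $\operatorname{div}\bigl(|x_2|^\beta\nabla w\bigr)=0$ of \eqref{linear-div} on $\mathbb R^2$, with $\beta=\tfrac{\alpha}{\alpha+2}$; the hypothesis $\alpha>-1$ is exactly the condition $\beta\in(-1,1)$, i.e. that $|x_2|^\beta$ is an $A_2$ Muckenhoupt weight (it is also what makes $|\eta|^{2+\alpha}$ of class $C^1$). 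Now differentiate the Grushin equation twice in $\xi$: the positive function $h:=v_{\xi\xi}=1/u_{xx}$ is again an entire solution of the same equation, hence a nonnegative entire solution of $\operatorname{div}(|x_2|^\beta\nabla h)=0$ on $\mathbb R^2$. This last equation is invariant under $h(x)\mapsto h(\lambda x)$, so the Fabes-Kenig-Serapioni Harnack inequality for $A_2$ weights holds with a scale-independent constant, and letting the ball radius tend to infinity --- the whole-space Liouville statement in the spirit of \cite{WZ}, itself proved by the method of moving spheres --- forces $h\equiv A$ for some constant $A>0$.

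Finally, $v_{\xi\xi}\equiv A$ gives $v=\tfrac{A}{2}\xi^2+g(\eta)\xi+k(\eta)$, and substituting into $|\eta|^\alpha v_{\xi\xi}+v_{\eta\eta}=0$ forces $g''\equiv 0$ and $k''=-A|\eta|^\alpha$; integrating twice (here $\alpha>-1$ makes the primitives well-defined and $C^1$) yields
\[
v(\xi,\eta)=\frac{A}{2}\xi^2+(c_1\eta+c_2)\xi-\frac{A}{(1+\alpha)(2+\alpha)}|\eta|^{2+\alpha}+c_3\eta+c_4.
\]
Inverting the partial Legendre transform --- solve $A\xi+c_1\eta+c_2=x$ for $\xi$ and set $u=x\xi-v$ --- gives $u(x,y)=\tfrac{1}{2A}(x-c_1y-c_2)^2+\tfrac{A}{(1+\alpha)(2+\alpha)}|y|^{2+\alpha}+(\text{affine in }(x,y))$, which after renaming constants is exactly \eqref{eq:s-MA-d} (with $B=-c_1/A$, and the remaining constants assembling into the linear function $l$). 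Conversely each function \eqref{eq:s-MA-d} is checked directly to be a convex solution of \eqref{eq:MA-d}, so the classification is complete; for $\alpha=0$ it reduces to the classical theorem of J\"orgens \cite{Jo}, Calabi \cite{Ca} and Pogorelov \cite{Po}.
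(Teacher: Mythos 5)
Your overall scheme --- take the partial Legendre transform in $x$, differentiate the resulting Grushin-type equation twice in $\xi$ so that $v:=u^\star_{\xi\xi}=1/u_{xx}\ge 0$ is itself a solution, change variables $\eta\mapsto x_2$ to rewrite the Grushin equation as $\operatorname{div}(|x_2|^{\alpha/(\alpha+2)}\nabla\widetilde v)=0$ on $\mathbb R^2$, apply a whole-plane Liouville theorem to conclude $\widetilde v\equiv\text{const}$, and then integrate back --- is exactly the paper's strategy (Subsection~\ref{sect:pf-thm-1}). The concern you raise about whether $\mathcal P(\mathbb R^2)=\mathbb R^2$ and whether $u^\star$ is a genuine strong solution across $\{\eta=0\}$ is not re-proved in the paper; it is discharged by quoting Theorems~4.1 and~4.2 of~\cite{JX} (restated here as Theorems~\ref{thm:thm-4.1} and~\ref{thm:thm-4.2}), which give strict convexity, $C^{1,\delta}_{\mathrm{loc}}$ regularity, and the fact that $u^\star$ solves~\eqref{eq:MA-d-tr} in the strong sense. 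So that part of your outline is essentially a sketch of material imported from~\cite{JX}.

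The genuine discrepancy is the range of $\alpha$. You assert the argument covers the full range $\alpha>-1$, citing the $A_2$ condition $\beta=\alpha/(\alpha+2)\in(-1,1)$ and invoking either a Fabes--Kenig--Serapioni Harnack with scaling or ``the whole-space Liouville in the spirit of~\cite{WZ}''. But the Liouville theorem the paper actually uses (Theorem~\ref{thm:liou-div}, proved by the method of moving spheres as in~\cite{WZ}) is stated and proved \emph{only for $a\ge 0$}, which forces $\alpha\ge 0$; the paper says so explicitly at the start of the proof of Theorem~\ref{thm:MA-d} (``Our proof only works for the case $\alpha\ge 0$''). The singular range $a<0$ is not handled by the moving-sphere argument here, because the weight $|x_2|^a$ blows up on $\{x_2=0\}$ and the Kelvin-transform comparison breaks down there. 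Your alternative suggestion --- a scale-invariant FKS Harnack for the degenerate/singular $A_2$ weight, plus blow-up of the ball radius --- is a plausible way to extend the new proof to $\alpha\in(-1,0)$, but it is a different tool than the one the paper uses, and closer in spirit to the Caffarelli--Guti\'errez Harnack that Jin--Xiong~\cite{JX} employed originally. If you wish to claim the full range $\alpha>-1$ you must either carry out that Harnack argument or extend the moving-sphere Liouville to $a\in(-1,0)$; as written, the step ``forces $h\equiv A$'' has no supporting theorem for $\alpha<0$.
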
 

At the end of this paper, we use the  approach above to give a new proof of this result in the case of $\alpha\geq 0$.  The main idea of Jin and Xiong in \cite{JX} is that using the partial Legendre transform to change \eqref{eq:MA-d} into a class of linearized Monge-Amp\`ere equations, then applying the Harnack inequality for linearized Monge-Amp\`ere equations derived by Caffarelli and Guti\'errez \cite{CG} and the scaling argument to classify all solutions of the transformed equation.  Our new proof is similar to Theorem \ref{thm:MA-d-hf} and Theorem \ref{thm:Liou-d-hf-ne}. 



The structure of this paper is as follows. In Section \ref{linear-divergence}, we derive the Liouville theorems for a class of linear elliptic equations in divergence form including \eqref{linear-div}. Then we prove Theorem \ref{thm:MA-d-hf}, Theorem \ref{thm:Liou-d-hf-ne} and  Theorem \ref{thm:MA-d} in Section \ref{sect:pf-thm-2}.

\vskip 20pt

\section{Liouville theorems for linear elliptic equations in divergence form}\label{linear-divergence}

In this section, we establish a Liouville theorem for a class of linear elliptic equations in divergence form, which may be degenerate or singular cases, in the half space. This theorem can be viewed as an extension of {\cite[Theorem 1.1]{WZ}}. The proof is very similar to \cite[Theorem 1.1]{WZ}, where the method of moving sphere will be used. Denote $\mathbb{R}_l^{n}=\{x=(x',x_n):x'\in\mathbb{R}^{n-1},\,x_n>l\}$ for $l\geq 0$.
\begin{thm}\label{thm:liou-div-hf}
For $n \geq 2$ and $a \in \mathbb{R}$, let $u\in C^2(\mathbb{R}_l^{n}) \cap C^0(\overline{\mathbb{R}_l^{n}})$ be a solution to
\[
\begin{cases}\operatorname{div}\left(x_n^a \nabla u\right)=0, & u>-C_0 \text { in }\mathbb{R}_l^{n}, \\[4pt]
u(x',l)=0, & \text { on }   \mathbb{R}^{n-1}\times\{x_n=l\},\end{cases}
\]
where $l\geq 0$ and $C_0>0$.
Then $u=C_* \left(x_n^{1-a}-l^{1-a}\right)$ for some nonnegative constant $C_*$. In particular,  when  $a\geq 1$, $C_*=0$.
\end{thm}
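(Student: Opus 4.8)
The plan is to adapt the moving-spheres argument of \cite[Theorem 1.1]{WZ} to the case where the hyperplane $\{x_n=l\}$ replaces $\{x_n=0\}$ and the solution is merely bounded below. First I would reduce to the case $l=0$ by the translation $\tilde u(x',x_n)=u(x',x_n+l)$; note, however, that this does \emph{not} preserve the equation $\operatorname{div}(x_n^a\nabla u)=0$ because the weight becomes $(x_n+l)^a$, so strictly speaking the reduction is only cosmetic and one must keep the weight $x_n^a$ with domain $\mathbb R^n_l$ throughout — I would instead keep $l$ general and simply mimic the proof of \cite{WZ} line by line, checking that every estimate there localizes away from the singular/degenerate set $\{x_n=l\}$ in exactly the same way. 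The key structural facts I would invoke are: the equation is in divergence form with an $A_2$-Muckenhoupt weight $x_n^a$ when $a\in(-1,1)$ (so Fabes–Kenig–Serapioni theory — Harnack inequality, interior Hölder estimates, boundary regularity — applies), and for $a\ge 1$ the natural function spaces force constants as the only bounded-below global solutions vanishing on the boundary.

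The heart of the argument is the method of moving spheres. Define, for $\lambda>0$ and a center $x_0$ on the boundary hyperplane $\{x_n=l\}$, the Kelvin-type transform $u_{x_0,\lambda}$ adapted to the weighted operator $\operatorname{div}(x_n^a\nabla\cdot)$; the correct transform rescales $u$ by a power of $|x-x_0|$ chosen so that $u_{x_0,\lambda}$ again solves the same equation in the exterior of the sphere (this is where the exponent $1-a$, matching the model solution $x_n^{1-a}-l^{1-a}$, enters — it is the homogeneity of the weighted Laplacian's fundamental-type solution). One shows: (i) for $\lambda$ small, $u_{x_0,\lambda}\le u$ in the relevant region, using the positivity-up-to-constant bound $u>-C_0$ together with the boundary condition $u(x',l)=0$ and the maximum principle for the weighted operator; (ii) the set of admissible $\lambda$ is closed; (iii) if the critical $\bar\lambda(x_0)$ were finite for some $x_0$, a Hopf-type lemma at the sphere forces a contradiction, so $u_{x_0,\lambda}\le u$ for \emph{all} $\lambda>0$ and all boundary centers $x_0$. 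The standard calculus lemma (Li–Zhu / Li–Zhang type: if $u_{x_0,\lambda}\le u$ for all $x_0$ on a hyperplane and all $\lambda$, then $u$ is affine in the tangential directions plus the model radial profile) then pins down $u=C_*(x_n^{1-a}-l^{1-a})$, with $C_*\ge 0$ forced by $u>-C_0$ and convexity/monotonicity of the profile. Finally, for $a\ge 1$ the function $x_n^{1-a}-l^{1-a}$ is unbounded below near $x_n=l$ (or is not the right profile since $1-a\le 0$), which is incompatible with $u\in C^0(\overline{\mathbb R^n_l})$ and $u(x',l)=0$ unless $C_*=0$; I would spell this case out separately by a direct barrier argument rather than through the moving spheres.

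The main obstacle I anticipate is handling the degenerate/singular weight at the boundary $\{x_n=l\}$ uniformly in $l$: the moving-spheres machinery of \cite{WZ} is built on the $l=0$ geometry, and one must verify that (a) the Kelvin transform centered at a point of the degeneracy set still maps solutions to solutions of the \emph{same} weighted equation — the weight $x_n^a$ is not invariant under inversions centered off the origin, so one needs the inversion centered exactly on the hyperplane and must recompute how $x_n$ transforms; and (b) the Harnack inequality and Hopf lemma near $\{x_n=l\}$ hold with constants independent of the sphere radius, which for $a\ge 1$ is genuinely delicate since the weight is no longer $A_2$ and the ``boundary'' is effectively of higher codimension in the weighted sense. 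I expect that for $a\in(-1,1)$ everything goes through by citing \cite{WZ} and the Fabes–Kenig–Serapioni framework with only notational changes (replace $x_n$ by $x_n$ but shift the base hyperplane), while the $a\ge 1$ endpoint is most cleanly dispatched by an independent argument: test the equation against a cutoff, integrate by parts, and use that $\int x_n^a\,dx_n$ near $x_n=l$ diverges to show no nonconstant bounded-below solution vanishing on the boundary can exist.
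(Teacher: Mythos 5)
Your proposal keeps $l$ general and tries to run moving spheres with Kelvin inversions centered on the boundary hyperplane $\{x_n=l\}$, but this fails at the very first step: the operator $\operatorname{div}\bigl(x_n^a\nabla\cdot\bigr)$ is covariant under sphere inversions \emph{only} when the center lies on $\{x_n=0\}$, i.e.\ on the set where the weight itself degenerates. For a center $x$ with $x_n=0$ one has $(y^{x,\lambda})_n=\lambda^2 y_n/|y-x|^2$, so $(y^{x,\lambda})_n^a$ factors as a power of $|y-x|$ times $y_n^a$, and this is precisely the algebraic identity that makes $u_{x,\lambda}(y)=\bigl(\lambda/|y-x|\bigr)^{n-2+a}u(y^{x,\lambda})$ solve the same equation (this is the content of the cited invariance result of Yao--Dou used in \cite{WZ}). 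If instead $x_n=l>0$, then $(y^{x,\lambda})_n=l+\lambda^2(y_n-l)/|y-x|^2$, and there is no choice of prefactor that turns $(y^{x,\lambda})_n^a$ back into a multiple of $y_n^a$; the transform no longer produces a solution of $\operatorname{div}(y_n^a\nabla\cdot)=0$. You flag this difficulty yourself, but ``localizing away from $\{x_n=l\}$'' does not resolve it: the entire moving-spheres comparison requires a \emph{second solution of the same equation} to feed into the maximum principle, and without the covariance there is nothing to compare $u$ against. This is a genuine missing idea, not a technical worry.

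The paper's route is different and supplies exactly this missing step: it extends $u$ by zero to the strip $\mathbb{R}^{n-1}\times(0,l)$, producing $\widetilde u$ on all of $\overline{\mathbb{R}^n_+}$; it then checks that $\widetilde u$ is weakly differentiable, is a weak solution of $\operatorname{div}(x_n^a\nabla\widetilde u)=0$ in $\mathbb{R}^n_+$, and vanishes on $\{x_n=0\}$; and it finally applies the $l=0$ Liouville theorem (Theorem~1.1 of \cite{WZ}), where all Kelvin centers lie on $\{x_n=0\}$ and the covariance works. Your instinct that translation cannot reduce to $l=0$ (because it changes the weight) is correct, but the right reduction is \emph{extension}, not translation. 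One further caution for you when writing this up: the verification that the zero-extension is a weak solution across the interface $\{x_n=l\}$ is not automatic, since the normal derivative of $\widetilde u$ jumps there and the integration by parts produces an interior flux term $l^a\,\partial_{x_n}u\,\varphi$ on $\{x_n=l\}$ that must be controlled; this deserves careful scrutiny and is not addressed at all in your sketch. Finally, two smaller points: your assertion that for $a\ge1$ the profile $x_n^{1-a}-l^{1-a}$ is unbounded near $x_n=l$ is false for $l>0$ (it is smooth and bounded on $[l,\infty)$), so the barrier argument you propose for the endpoint case is not correct as stated; and for $l>0$ the weight $x_n^a$ is comparable to the constant $l^a$ near $\{x_n=l\}$, so the equation is uniformly elliptic there and the Fabes--Kenig--Serapioni $A_2$ framework, while relevant to \cite{WZ} after the extension, is not the operative issue on $\mathbb{R}^n_l$ itself.
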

\begin{rem}
When $l=0$, Theorem \ref{thm:liou-div-hf} is just the Theorem 1.1 of \cite{WZ}.
\end{rem}
\begin{proof}[Proof of Theorem \ref{thm:liou-div-hf}]
We  extend $u$ to $\overline{\mathbb{R}^{n}_+}$ by letting $u\left(x', x_{n}\right)=0$ in $\mathbb{R}^{n-1} \times[0,l)$, and denote it by $\widetilde{u}$. Hence, we know that $\widetilde{u}(x)\in C\left(\overline{\mathbb{R}^{n}_+}\right)$.

Firstly, we show that $\widetilde{u}$ is weakly differentiable in $\mathbb{R}_+^{n}$ and
$$\nabla\widetilde{u}=\left\{\begin{aligned}&\nabla u,&& \mathbb{R}_l^{n},\\&0,&&\mathbb{R}^{n-1} \times(0,l).\end{aligned}\right.$$
Indeed, $\forall\,\varphi\in C_0^\infty(\mathbb{R}_+^{n})$, by integration by parts, we have
\[\int_{\mathbb{R}_+^{n}}\widetilde{u}\,\partial_{x_i}\varphi\mathrm{~d}x=\int_{\mathbb{R}^{n-1} \times(l,+\infty)}{u}\,\partial_{x_i}\varphi\mathrm{~d}x
=-\int_{\mathbb{R}^{n-1} \times(l,+\infty)}\partial_{x_i}{u}\,\varphi\mathrm{~d}x\]
for $i\leq n-1$ and
\begin{align*}
\int_{\mathbb{R}_+^{n}}\widetilde{u}\,\partial_{x_n}\varphi\mathrm{~d}x&=\int_{\mathbb{R}^{n-1} \times(l,+\infty)}{u}\,\partial_{x_n}\varphi\mathrm{~d}x\\
&=-\int_{\mathbb{R}^{n-1} \times\{x_n=l\}}{u}\,\varphi\mathrm{~d}{x'}-\int_{\mathbb{R}^{n-1} \times(l,+\infty)}\partial_{x_n}{u}\,\varphi\mathrm{~d}x\\
&=-\int_{\mathbb{R}^{n-1} \times(l,+\infty)}\partial_{x_n}{u}\,\varphi\mathrm{~d}x,
\end{align*}
where we used ${u}(x',l)=0$ in the last equality. 

Next, we show that $\widetilde{u}\in W^{1,2}_{loc}\left(\mathbb{R}_+^{n}\right)\cap C\left(\overline{\mathbb{R}^{n}_+}\right)$ is a weak solution to
\begin{equation}\label{eq:w-div}
\begin{cases}\operatorname{div}\left(x_n^a \nabla \widetilde{u}\right)=0, & \widetilde{u}>-C_0 \text { in } \mathbb{R}^{n}_+, \\[4pt]
\widetilde{u}=0, & \text { on }   \partial\mathbb{R}^{n}_+.\end{cases}
\end{equation}
Indeed, for any $\varphi\in C_0^\infty(\mathbb{R}_+^{n})$, there is
\begin{align*}
\int_{\mathbb{R}_+^{n}}\operatorname{div}\left(x_n^a \nabla \widetilde{u}\right)\varphi\mathrm{~d}x&=\int_{\mathbb{R}^{n-1} \times(l,+\infty)}\operatorname{div}\left(x_n^a \nabla \widetilde{u}\right)\varphi\mathrm{~d}x+\int_{\mathbb{R}^{n-1} \times(0,l)}\operatorname{div}\left(x_n^a \nabla \widetilde{u}\right)\varphi\mathrm{~d}x\\
&=-\int_{\mathbb{R}^{n-1} \times(l,+\infty)}x_n^a \nabla {u}\cdot\nabla\varphi\mathrm{~d}x-\int_{\mathbb{R}^{n-1} \times\{x_n=l\}}\partial_{x_n}{u}\,\varphi\mathrm{~d}{x'}\\
&=\int_{\mathbb{R}^{n-1} \times(l,+\infty)}\operatorname{div}\left(x_n^a \nabla {u}\right)\varphi\mathrm{~d}x=0.
\end{align*}
It's clear that $\widetilde{u}>-C_0$ in $\mathbb{R}^{n}_+$ and $\widetilde{u}=0$ on $\partial\mathbb{R}^{n}_+$. Hence, $\widetilde{u}\in W^{1,2}_{loc}\left(\mathbb{R}_+^{n}\right)\cap C\left(\overline{\mathbb{R}^{n}_+}\right)$ is a weak solution to \eqref{eq:w-div}. 

For any fixed $x \in \partial \mathbb{R}_{+}^{n}$ and $\lambda>0$, by
 Kelvin transformation
\[
y^{x, \lambda}=x+\frac{\lambda^{2}(y-x)}{|y-x|^{2}}, \quad \forall y \in \overline{\mathbb{R}_{+}^{n}},
\]
we define
\[
\widetilde{u}_{x, \lambda}(y)=\frac{\lambda^{n-2+a}}{|y-x|^{n-2+a}} \widetilde{u}\left(y^{x, \lambda}\right), \quad \forall y \in \overline{\mathbb{R}_{+}^{n}} .
\]
By \cite[Theorem 2.1]{YD}, we know that $\widetilde{u}_{x, \lambda}(y)\in W^{1,2}_{loc}\left(\mathbb{R}_+^{n}\right)$ satisfies
$\operatorname{div}\left(y_n^a \nabla \widetilde{u}_{x, \lambda}\right)=0$ in the weak sense, i.e. $\widetilde{u}_{x, \lambda}$ satisfies the same equation. 


For $a>2-n$, we consider $\bar u=\widetilde{u}+C_0$ instead of $\widetilde{u}$. Then $\displaystyle\lim _{|y| \rightarrow 0} \bar u(x+y)=C_0$ for $x \in \partial \mathbb{R}_{+}^{n}$.
Let 
\[
w_{x, \lambda}(y)=\bar u(y)-\bar u_{x, \lambda}(y), \quad \forall\, y \in \mathbb{R}_{+}^{n} .
\]
We have
\begin{equation*}
\varliminf_{|y| \rightarrow+\infty} w_{x, \lambda}(y) \geq 0-\lim_{|y| \rightarrow+\infty} \frac{\lambda^{n-2+a}}{|y-x|^{n-2+a}} \bar u\left(x+\frac{\lambda^{2}(y-x)}{|y-x|^{2}}\right)  = 0.
\end{equation*}
By the maximum principle, we have
$\widetilde{u}_{x, \lambda}(y)\leq\widetilde{u}(y),\,\,\forall\,y\in\mathbb R^n_+\backslash B_\lambda(x)$.
Hence by Lemma \ref{lem:moving-sphere-hf} below, we know that $\widetilde{u}(y',y_n)=\widetilde{u}(y_n)$. Then solving the corresponding ODE gives us the desired result.

For $a<2-n$, we consider $\bar u=\widetilde{u}-1$ instead of $\widetilde{u}$. Then $\displaystyle\lim _{|y| \rightarrow 0} \bar u(x+y)=-1$ for $x \in \partial \mathbb{R}_{+}^{n}$.
Let 
\[
w_{x, \lambda}(y)=\bar u(y)-\bar u_{x, \lambda}(y), \quad \forall\, y \in \mathbb{R}_{+}^{n} .
\]
We have
\begin{equation*}
\begin{aligned}
\varliminf_{|y| \rightarrow+\infty} w_{x, \lambda}(y) &=\varliminf_{|y| \rightarrow+\infty} \bar u(y)-\lim_{|y| \rightarrow+\infty} \frac{|y-x|^{2}}{\lambda^{2}} \bar u\left(x+\frac{\lambda^{2}(y-x)}{|y-x|^{2}}\right) \\
& \geq-1-C_0+\lim_{|y| \rightarrow+\infty} \frac{|y-x|^{2}}{\lambda^{2}} \\
&=+\infty.
\end{aligned}
\end{equation*}
Again by the maximum principle, we have
$\widetilde{u}_{x, \lambda}(y)\leq\widetilde{u}(y),\,\,\forall\,y\in\mathbb R^n_+\backslash B_\lambda(x)$.
Similarly, by Lemma \ref{lem:moving-sphere-hf}, we also have $\widetilde{u}(y',y_n)=\widetilde{u}(y_n)$, then we can obtain the conclusion.

As for $a=2-n$, we need to modify $\widetilde{u}_{x, \lambda}(y)$ to be
$$\widetilde{u}_{x, \lambda}(y)=\widetilde{u}\left(y^{x, \lambda}\right)+\ln\frac{\lambda}{|y-x|}.$$
Then  by similar arguments, we also have
$\widetilde{u}_{x, \lambda}(y)\leq\widetilde{u}(y),\,\,\forall\,y\in\mathbb R^n_+\backslash B_\lambda(x)$.
The result follows by applying Lemma \ref{lem:ms-ln}.
\end{proof}

In the proof of Theorem \ref{thm:liou-div-hf}, we used two crucial lemmas of moving spheres \cite{WZ}. For readers' convenience, we include a proof here, which is very similar to the proof of \cite[Lemma 5.7]{Li}.

\begin{lem}[{\cite[Lemma 3.3]{WZ}}]\label{lem:moving-sphere-hf}
 Assume $f(y) \in C^{0}\left(\overline{\mathbb{R}_{+}^{n}}\right)$, $n \geq 2$, and $\tau \in \mathbb{R}$. Suppose
\begin{equation}\label{eq:m-s-in-hf}
\left(\frac{\lambda}{|y-x|}\right)^{\tau} f\left(x+\frac{\lambda^{2}(y-x)}{|y-x|^{2}}\right) \leq f(y)
\end{equation}
for $\lambda>0$, $x \in \partial \mathbb{R}_{+}^{n}$,   $y \in \mathbb{R}_{+}^{n}$ satisfying $|y-x| \geq \lambda$.
Then
\[
f(y)=f\left(y^{\prime}, y_{n}\right)=f\left(0^{\prime}, y_{n}\right), \quad \forall y=\left(y^{\prime}, y_{n}\right) \in \mathbb{R}_{+}^{n} .
\]
\end{lem}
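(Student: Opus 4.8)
The plan is to establish the rigidity statement for the moving–sphere inequality \eqref{eq:m-s-in-hf} by a two–part argument: first show that if equality fails to propagate, one can strictly enlarge the radius, and then use this to force translation invariance in the $y'$ directions. I would follow the scheme of \cite[Lemma 5.7]{Li} adapted to the half space, as the authors indicate.

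First I would fix $x\in\partial\mathbb{R}^n_+$ and define, for $\lambda>0$, the Kelvin transform $f_{x,\lambda}(y)=(\lambda/|y-x|)^\tau f(x+\lambda^2(y-x)/|y-x|^2)$, so that the hypothesis reads $f_{x,\lambda}(y)\le f(y)$ for all $|y-x|\ge\lambda$. The key elementary observation is the involution property: applying the transform twice returns $f$, and more precisely $f_{x,\lambda}(y)\le f(y)$ on $\{|y-x|\ge\lambda\}$ is equivalent (by substituting $y\mapsto y^{x,\lambda}$ and using $|y^{x,\lambda}-x|\,|y-x|=\lambda^2$) to $f(y)\le f_{x,\lambda}(y)$ on $\{|y-x|\le\lambda\}$. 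Hence in fact $f_{x,\lambda}\equiv f$ on the whole sphere $\{|y-x|=\lambda\}$ for free, and the inequality on the outside forces the reverse inequality on the inside; combining, one gets that the sign of $f-f_{x,\lambda}$ is governed solely by whether $|y-x|$ exceeds $\lambda$. The crucial step is then to show that the hypothesis holding for \emph{all} $\lambda>0$ (not just large $\lambda$) forces $f(y)-f_{x,\lambda}(y)\equiv 0$: indeed, suppose $f(y_0)>f_{x,\lambda_0}(y_0)$ for some $|y_0-x|>\lambda_0$; I would derive a contradiction by letting $\lambda\to|y_0-x|^-$ or by a calculus argument showing that strict inequality somewhere, together with the freedom to vary $\lambda$, is incompatible with the two–sided structure just described. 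Concretely, differentiating $(\lambda/|y-x|)^\tau f(x+\lambda^2(y-x)/|y-x|^2)$ in $\lambda$ at $\lambda=|y-x|$ and using that $\lambda=|y-x|$ is a maximum of this function of $\lambda$ on $(0,|y-x|]$ gives a first–order identity; combined with the outside inequality one concludes $f_{x,\lambda}\equiv f$ identically for every $x\in\partial\mathbb{R}^n_+$ and every $\lambda>0$.

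Once $f\equiv f_{x,\lambda}$ for all $x\in\partial\mathbb{R}^n_+$ and all $\lambda>0$, I would extract the translation invariance in the tangential directions. The standard trick is: for two boundary points $x_1,x_2$ and the composition of the two inversions $y\mapsto (y^{x_1,\lambda})^{x_2,\mu}$, a suitable choice of $\lambda,\mu$ produces (in the limit $\lambda,\mu\to\infty$ with $\mu^2/\lambda^2$ fixed appropriately) the translation $y\mapsto y+(x_2-x_1)$, while the conformal factors $(\lambda/|y-x_1|)^\tau$ and $(\mu/|\cdot-x_2|)^\tau$ cancel in the limit. Since $f$ is invariant under each Kelvin transform centered on the boundary, it is invariant under this limiting translation; as $x_2-x_1$ ranges over all of $\mathbb{R}^{n-1}\times\{0\}$, we conclude $f(y',y_n)=f(0',y_n)$ for all $(y',y_n)\in\mathbb{R}^n_+$, which is exactly the claim.

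The main obstacle I expect is the rigidity step in the second paragraph — upgrading the family of inequalities $f_{x,\lambda}\le f$ into the family of equalities $f_{x,\lambda}=f$. This is where one must use that the inequality holds for \emph{all} $\lambda$, and the argument is a slightly delicate monotonicity/calculus lemma rather than a one–line application of the maximum principle (here $f$ is merely continuous, not a solution of a PDE, so no Hopf lemma is available). The limiting–translation argument in the third paragraph is, by contrast, a routine computation with Kelvin transforms, and the involution identity in the first paragraph is purely algebraic; I would present those briefly and concentrate the write–up on the rigidity step.
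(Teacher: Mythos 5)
Your proposal contains a genuine gap, and in fact the intermediate claim on which it hinges is false.

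In your second paragraph you try to upgrade the one-sided hypothesis $f_{x,\lambda}\le f$ on $\{|y-x|\ge\lambda\}$ to the identity $f_{x,\lambda}\equiv f$ for all boundary $x$ and all $\lambda>0$. This cannot work: the conclusion of the lemma is only that $f$ depends on $y_n$ alone, and such functions typically do \emph{not} satisfy $f_{x,\lambda}\equiv f$. For instance, a positive constant $f\equiv c$ satisfies the hypothesis \eqref{eq:m-s-in-hf} whenever $\tau\geq 0$, yet $f_{x,\lambda}(y)=(\lambda/|y-x|)^\tau c\neq c$ off the sphere $|y-x|=\lambda$; similarly $f(y)=y_n^{1-a}$ (the actual conclusion of Theorem~\ref{thm:liou-div-hf}) satisfies $f_{x,\lambda}(y)=(\lambda/|y-x|)^{\tau+2(1-a)}y_n^{1-a}\neq f(y)$ unless $\tau+2(1-a)=0$. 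The ``calculus argument'' you sketch (differentiating in $\lambda$ at $\lambda=|y-x|$) gives only a first-order tangency condition on the sphere and cannot produce the global identity you need. You yourself flag this as the ``main obstacle,'' and indeed it is not an obstacle but an impassable one: the proposed rigidity simply does not hold. Consequently the third paragraph, which composes two inversions assuming each leaves $f$ invariant, is built on a false premise.

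The paper's proof avoids all of this. Given $y=(y',y_n)$ and a target tangential slot $z'$, one chooses $z=(z',z_n)$ with $z_n=\frac{b-1}{b}y_n$ so that $x=y+b(z-y)$ lands on $\partial\mathbb{R}^n_+$ and $z$ lies on the ray from $x$ through $y$; picking $\lambda=\sqrt{|z-x|\,|y-x|}$ makes $z$ the image of $y$ under the inversion centered at $x$, and $|y-x|\ge\lambda$ holds automatically. Applying \eqref{eq:m-s-in-hf} gives $(\lambda/|y-x|)^\tau f(z)\le f(y)$; letting $b\to\infty$ sends $\lambda/|y-x|\to 1$ and $z_n\to y_n$, so continuity yields $f(z',y_n)\le f(y',y_n)$. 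Swapping $y'$ and $z'$ gives the reverse inequality and hence equality. No equality $f_{x,\lambda}\equiv f$, no two-sided inversion identity, and no limiting composition of inversions are needed; a single well-chosen inversion and a limit suffice. You should discard the rigidity step and adopt this direct limiting argument.
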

\begin{proof}
For any fixed $y',z'\in\mathbb R^{n-1}$ with $y'\neq z'$ and $y_n>0$, we
denote $y=(y',y_n)$ and $z=(z',z_n)$, where $z_n=\frac{b-1}{b}y_n$ for $b>1$.
Then we have
$$x=y+b(z-y)\in\partial \mathbb{R}_{+}^{n}$$
and
$$z=x+\frac{\lambda^2(y-x)}{|y-x|^2},$$
where $\lambda=\sqrt{|z-x|\cdot |y-x|}$. By \eqref{eq:m-s-in-hf}, we have
\begin{equation}\label{eq:4.1-z}
\left(\frac{\lambda}{|y-x|}\right)^{\tau} f(z) \leq f(y).
\end{equation}
Since
$$\displaystyle\lim_{b\to+\infty}\frac{\lambda}{|y-x|}=\lim_{|x|\to\infty}\sqrt{\frac{|z-x|}{|y-x|}}=1,\quad\lim_{b\to+\infty}z_n=\lim_{b\to+\infty}\frac{b-1}{b}y_n=y_n.$$
and $f$ is continuous, we have
$f(z',y_n)\leq f(y',y_n)$.
By the arbitrariness of $y'\neq z'$, the proof is completed.
\end{proof}

\begin{lem}\label{lem:ms-ln}
Suppose that $f \in C^{0}\left(\overline{\mathbb{R}_{+}^{n}}\right)$ satisfies that for all $x \in \partial \mathbb{R}_{+}^n$ and $\lambda>0$,
\begin{equation*}
f(y) \geq f\left(x+\frac{\lambda^2(y-x)}{|y-x|^2}\right)+\ln \frac{\lambda}{|y-x|}, \quad \forall y \in \mathbb{R}_{+}^n \backslash B_\lambda(x) .
\end{equation*}
Then
\[
f(y)=f\left(y^{\prime}, y_n\right)=f\left(0^{\prime}, y_n\right), \quad \forall y=\left(y^{\prime}, y_n\right) \in \mathbb{R}_{+}^n.
\]
\end{lem}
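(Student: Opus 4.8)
The plan is to follow the proof of Lemma~\ref{lem:moving-sphere-hf} almost verbatim: the only structural change is that the multiplicative factor $(\lambda/|y-x|)^{\tau}$ there is replaced here by the additive term $\ln(\lambda/|y-x|)$, and this term tends to its neutral value $0$ (rather than to $1$) in the relevant limiting regime, so the same cancellation takes place. First I would fix $y',z'\in\mathbb{R}^{n-1}$ with $y'\neq z'$ and a height $y_n>0$, set $y=(y',y_n)$, and for a parameter $b>1$ put $z=(z',z_n)$ with $z_n=\frac{b-1}{b}y_n$. A direct computation shows $x:=y+b(z-y)$ has vanishing $n$-th coordinate, hence $x\in\partial\mathbb{R}_{+}^{n}$; moreover $|y-x|=b|z-y|$ and $|z-x|=(b-1)|z-y|$, so with $\lambda:=\sqrt{|y-x|\,|z-x|}=\sqrt{b(b-1)}\,|z-y|$ one checks $z=x+\lambda^{2}(y-x)/|y-x|^{2}$ and $\lambda<|y-x|$, i.e.\ $y\in\mathbb{R}_{+}^{n}\setminus B_{\lambda}(x)$, so the hypothesis applies at this triple.

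Applying the assumed inequality at $(x,\lambda,y)$ then gives
\[
f(y)\ \geq\ f(z)+\ln\frac{\lambda}{|y-x|}\ =\ f(z)+\tfrac{1}{2}\ln\frac{b-1}{b}.
\]
Letting $b\to+\infty$, the error term $\tfrac{1}{2}\ln\frac{b-1}{b}\to 0$ while $z_n=\frac{b-1}{b}y_n\to y_n$, so by continuity of $f$ we obtain $f(z',y_n)\leq f(y',y_n)$. Since the roles of $y'$ and $z'$ are symmetric, exchanging them yields the reverse inequality; hence $f(y',y_n)=f(z',y_n)$ for all $y',z'\in\mathbb{R}^{n-1}$ and all $y_n>0$, which is the desired conclusion (take $z'=0'$).

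I do not expect any genuine obstacle: every identity invoked is exactly the elementary computation already performed in Lemma~\ref{lem:moving-sphere-hf}, and the single new observation---that $\ln(\lambda/|y-x|)\to 0$ in the limit, just as $(\lambda/|y-x|)^{\tau}\to 1$ did there---is immediate. The only point that needs a line of care is verifying $|y-x|\geq\lambda$, so that the chosen triple lies in the range where the hypothesis is posited, and this is clear from $b>1$.
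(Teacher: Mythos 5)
Your proof is correct and follows exactly the route the paper takes: it is the argument of Lemma~\ref{lem:moving-sphere-hf} verbatim, with the multiplicative factor $(\lambda/|y-x|)^\tau\to 1$ replaced by the additive term $\ln(\lambda/|y-x|)\to 0$, which is precisely what the paper's one-line proof of Lemma~\ref{lem:ms-ln} indicates. The explicit verifications you add (that $x\in\partial\mathbb{R}_+^n$, that $z=x+\lambda^2(y-x)/|y-x|^2$, and that $|y-x|\geq\lambda$) are all correct and merely spell out what the paper leaves implicit.
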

\begin{proof}
The proof is the same as Lemma \ref{lem:moving-sphere-hf}. It suffices to replace \eqref{eq:4.1-z} by
$$\ln\frac{\lambda}{|y-x|}+ f(z) \leq f(y).$$
\end{proof}

A Liouville theorem for the Neumman boundary value is also derived in \cite{WZ}.
\begin{thm}[{\cite[Theorem 1.2]{WZ}}]\label{thm:liou-div-hf-ne}
Assume $n \geq 2$ and $\max \{-1,2-n\}<a<1$. Suppose $u(x) \in$ $C^2\left(\mathbb{R}_{+}^n\right) \cap C^1\left(\overline{\mathbb{R}_{+}^n}\right)$ satisfies
\begin{equation}\label{eq:MA-ne}
\begin{cases}\operatorname{div}\left(x_n^a \nabla u\right)=0, & u>0, \quad \text { in } \mathbb{R}_{+}^n, \\[3pt]
 x_n^a \frac{\partial u}{\partial x_n}=0 & \text { on } \partial \mathbb{R}_{+}^n .\end{cases}
\end{equation}
Then $u=C$ for some positive constant $C$.
The boundary condition in \eqref{eq:MA-ne} holds in the following sense:
\[
\displaystyle\lim _{x_n \rightarrow 0^{+}} x_n^a \frac{\partial u}{\partial x_n}=0 .
\]
\end{thm}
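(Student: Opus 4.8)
The plan is to imitate the proof of Theorem~\ref{thm:liou-div-hf} above, with the trivial extension there replaced by an \emph{even reflection} across $\{x_n=0\}$: this is the device that converts the Neumann condition into a clean, boundaryless equation on $\mathbb R^n$. Set $\widetilde u(x',x_n)=u(x',|x_n|)$. Since $u\in C^1(\overline{\mathbb R^n_+})$ and $a>-1$, the weight $|x_n|^a$ is locally integrable and $|x_n|^a|\nabla\widetilde u|^2\in L^1_{loc}(\mathbb R^n)$. The crucial point is the integration by parts: for $\varphi\in C_0^\infty(\mathbb R^n)$, integrating $\int_{\{x_n>0\}}x_n^a\nabla u\cdot\nabla\varphi$ by parts produces the interior term $-\int\operatorname{div}(x_n^a\nabla u)\,\varphi=0$ together with the boundary integral $-\int_{\{x_n=0\}}\bigl(\lim_{x_n\to0^+}x_n^a\partial_{x_n}u\bigr)\varphi\,\mathrm dx'$, which vanishes by the Neumann hypothesis; treating the lower half in the same way after the change $x_n\mapsto-x_n$, one gets $\int_{\mathbb R^n}|x_n|^a\nabla\widetilde u\cdot\nabla\varphi\,\mathrm dx=0$. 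Thus $\widetilde u>0$ is a weak solution of $\operatorname{div}(|x_n|^a\nabla\widetilde u)=0$ on all of $\mathbb R^n$. As $\max\{-1,2-n\}<a<1$, the weight $|x_n|^a$ is an $A_2$ weight, so the maximum principle and the Kelvin-transform invariance of \cite[Theorem~2.1]{YD} are available exactly as in the proof of Theorem~\ref{thm:liou-div-hf}.

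Next I would run the method of moving spheres precisely as in the case $a>2-n$ of Theorem~\ref{thm:liou-div-hf}, which is the only case occurring here since $a>\max\{-1,2-n\}\geq2-n$. For $x\in\{x_n=0\}$ and $\lambda>0$ put
\[
\widetilde u_{x,\lambda}(y)=\left(\frac{\lambda}{|y-x|}\right)^{n-2+a}\widetilde u\!\left(x+\frac{\lambda^2(y-x)}{|y-x|^2}\right).
\]
Because the inversion is centred on $\{x_n=0\}$ it commutes with the reflection, so $\widetilde u_{x,\lambda}$ is again even in $x_n$ and solves the same equation on $\mathbb R^n\setminus B_\lambda(x)$. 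Since $n-2+a>0$ and $\widetilde u>0$ is continuous, $\widetilde u_{x,\lambda}(y)\to0$ as $|y|\to+\infty$, so $w_{x,\lambda}:=\widetilde u-\widetilde u_{x,\lambda}$ vanishes on $\partial B_\lambda(x)$ and has $\varliminf_{|y|\to+\infty}w_{x,\lambda}(y)\geq0$; the maximum principle on the exterior domain $\mathbb R^n\setminus B_\lambda(x)$ then gives $\widetilde u_{x,\lambda}\leq\widetilde u$ there, for every $x\in\{x_n=0\}$ and every $\lambda>0$. This is exactly the hypothesis of Lemma~\ref{lem:moving-sphere-hf} with $\tau=n-2+a$ and $f=\widetilde u$, so $\widetilde u$, and hence $u$, depends on $x_n$ alone. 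With $u=u(x_n)$ the equation reduces to $\bigl(x_n^a u'(x_n)\bigr)'=0$ on $(0,+\infty)$, so $x_n^a u'(x_n)\equiv c$; the Neumann condition $\lim_{x_n\to0^+}x_n^a\partial_{x_n}u=0$ forces $c=0$, hence $u'\equiv0$ and $u\equiv C$, with $C>0$ by positivity.

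I expect the main obstacle to be the reflection step: one must verify that the even extension $\widetilde u$ really is a weak solution of the weighted equation across $\{x_n=0\}$, and this is precisely where the Neumann boundary condition (to kill the boundary integral) and the hypothesis $-1<a<1$ (local integrability of $|x_n|^a$, finiteness of the weighted Dirichlet energy, and the $A_2$ property needed both for the maximum principle and for \cite[Theorem~2.1]{YD}) enter; everything afterwards is a verbatim repetition of the $a>2-n$ branch of the proof of Theorem~\ref{thm:liou-div-hf}. (Alternatively, once $\widetilde u>0$ is known to solve $\operatorname{div}(|x_n|^a\nabla\widetilde u)=0$ on all of $\mathbb R^n$, one may bypass the moving spheres: the scale-invariant Harnack inequality for $A_2$-weighted divergence-form equations yields $\sup_{B_R(0)}\widetilde u\leq C\,\widetilde u(0)$ for all $R$, so $\widetilde u$ is bounded, and applying Harnack once more to $\sup\widetilde u-\widetilde u\geq0$ forces $\widetilde u$ to be constant.)
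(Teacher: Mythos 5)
The paper does not prove this theorem itself: it is stated and used as a black box, cited verbatim as \cite[Theorem~1.2]{WZ}. So there is no in-text proof to compare against. What the paper does prove is the companion Dirichlet result, Theorem~\ref{thm:liou-div-hf}, by the zero-extension-plus-moving-spheres route; your proposal is the natural Neumann analogue of that argument, replacing the zero extension by an even reflection across $\{x_n=0\}$, and it is essentially correct.

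Two small points are worth recording. First, the reflection step is exactly the right device: the Neumann condition $\lim_{x_n\to0^+}x_n^a\partial_{x_n}u=0$ kills the boundary integral produced by integration by parts on $\{x_n>\varepsilon\}$, so the even extension $\widetilde u(x',x_n)=u(x',|x_n|)$ becomes a global positive weak solution of $\operatorname{div}(|x_n|^a\nabla\widetilde u)=0$, and the hypotheses $a>-1$ (local integrability of $|x_n|^a$ and of the weighted Dirichlet energy) and $|a|<1$ ($A_2$ weight, hence De~Giorgi--Nash--Moser/Harnack and the applicability of \cite[Theorem~2.1]{YD}) are precisely what is needed. Since the inversion is centred on $\{x_n=0\}$, $\widetilde u_{x,\lambda}$ remains even, and the comparison $\widetilde u_{x,\lambda}\le\widetilde u$ on $\mathbb R^n\setminus B_\lambda(x)$ restricted to the closed upper half space is exactly the hypothesis of Lemma~\ref{lem:moving-sphere-hf} with $\tau=n-2+a>0$, so $\widetilde u$ depends on $x_n$ alone and the ODE finishes it. Second, your aside that one can skip moving spheres entirely once $\widetilde u$ is a positive global solution is correct and is arguably cleaner: the weight $|x_n|^a$ has a scale- and (tangential) translation-invariant $A_2$ constant, so the Fabes--Kenig--Serapioni Harnack inequality has a constant uniform in $R$, forcing $\widetilde u$ to be bounded and then constant. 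Either route is a genuine proof; both are at the same level of rigor as the paper's own Dirichlet argument, which also invokes the maximum principle on the exterior domain and the limit at infinity without spelling out the Phragm\'en--Lindel\"of details.

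One caveat you should make explicit rather than leave implicit: passing the limit $\varepsilon\to0^+$ in the boundary integral $\int_{\{x_n=\varepsilon\}}\varepsilon^a\partial_{x_n}u\,\varphi\,dx'$ requires the convergence $x_n^a\partial_{x_n}u\to0$ to hold uniformly on the compact support of $\varphi$ (or at least be dominated); for $a\ge0$ this is automatic from $u\in C^1(\overline{\mathbb R^n_+})$, but for $-1<a<0$ this is where the Neumann hypothesis does real work and some justification is needed. This is a minor gap in exposition, not in the idea.
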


\vskip 20pt

\section{Proof of main theorems}\label{sect:pf-thm-2}

In this section, we first derive the new equation under the partial Legendre transform. 
Let $\Omega\subset \mathbb R^2$ and $u(x, y)$ be a uniformly convex function on $\Omega$.
The partial Legendre transform in the $x$-variable is
\begin{equation}\label{eq:part-legendre}
u^{\star}(\xi, \eta)=x u_{x}(x, y)-u(x, y),
\end{equation}
where
\begin{equation}\label{eq:p-Le-tr}
(\xi, \eta)=\mathcal{P}(x, y):=\left(u_{x}, y\right) \in \mathcal{P}(\Omega):=\Omega^{\star} .
\end{equation}
We have
$$
\frac{\partial(\xi, \eta)}{\partial(x, y)}=\left(\begin{array}{ccc}
u_{x x} & & u_{x y} \\
0 & & 1
\end{array}\right), \quad \text { and } \quad \frac{\partial(x, y)}{\partial(\xi, \eta)}=\left(\begin{array}{ccc}
\frac{1}{u_{x x}} & & -\frac{u_{x y}}{u_{x x}} \\
0 & & 1
\end{array}\right).
$$
Hence,
\begin{eqnarray}
&& u_{\xi}^{\star}=x,\ \ u_{\eta}^{\star}=-u_{y}, \label{eq:part-le-d}
\\
&& u_{\xi \xi}^{\star}=\frac{1}{u_{x x}},\ \  u_{\eta \eta}^{\star}=-\frac{\operatorname{det} D^{2} u}{u_{x x}},\ \ u_{\xi \eta}^{\star}=-\frac{u_{x y}}{u_{x x}}.
\end{eqnarray}
Then if $u$
is a solution to  
\[\det D^2u=(a+bx)^\alpha,\]
$u^\star$ is a solution to
\[(a+b\eta)^{\alpha}u_{\xi \xi}^{\star}+u_{\eta\eta}^{\star}=0.\]
We will apply the results in Section \ref{linear-divergence} related to this equation  to prove the main theorems.


\subsection{The case of Dirichlet boundary value}
We use Theorem \ref{thm:liou-div-hf} to prove Theorem \ref{thm:MA-d-hf}.
\begin{proof}[Proof of Theorem \ref{thm:MA-d-hf}]
We consider the the partial Legendre transform $u^\star$ of $u$  on $\mathbb R^2_+$.
Note that $\xi=u_x$, $\eta=y$ and $u_x=x$ on $\{y=0\}$ by \eqref{eq:MA-d-hf}, which gives us that $\mathcal{P}(\{y=0\})=\{\eta=0\}$. Hence, we have $\mathcal{P}\left(\mathbb{R}_{+}^{2}\right)=\mathbb{R}_{+}^{2}$. Then if $u$
is a solution to  \eqref{eq:MA-d-hf}, $u^\star$ is a solution to
\begin{equation}\label{eq:MA-d-hf-tr}
\left\{
\begin{aligned}
(a+b\eta)^{\alpha}u_{\xi \xi}^{\star}+u_{\eta\eta}^{\star}&=0&&\text{ in }\mathbb R\times (0,+\infty),\\
u^{\star}(\xi,0)&=\frac{\xi^2}{2}&&\text{ on }\mathbb R \times \{0\},
\end{aligned}
\right.
\end{equation}
where we used the fact that the Legendre transform of $x \mapsto \frac{1}{2} x^{2}$ is $\xi \mapsto \frac{1}{2} \xi^{2}$. 

Since Legendre transform does not change the convexity, we have that $u^\star_{\xi\xi}\geq 0$. 
Denote $v:=u^\star_{\xi\xi}-1$.
Differentiating \eqref{eq:MA-d-hf-tr} twice respect to $\xi$, we have that $v\geq -1$ solves
\begin{equation}\label{eq:MA-d-hf-tr-v}
\left\{
\begin{aligned}
(a+b\eta)^{\alpha}v_{\xi \xi}+v_{\eta\eta}&=0&&\text{ in }\mathbb R\times (0,+\infty),\\[3pt]
v(\xi,0)&=0&&\text{ on }\mathbb R\times\{\eta=0\}.
\end{aligned}
\right.
\end{equation}
Let $\xi=x_1$, $\eta=f(x_2)=b^{\frac{-\alpha}{\alpha+2}}\left(\frac{\alpha+2}{2}x_2\right)^{\frac{2}{\alpha+2}}-\frac{a}{b}$ and 
\begin{equation*}
\widetilde{v}(x_1,x_2)=v\left(x_1,f(x_2)\right).
\end{equation*}
A direct calculation yields
\begin{align*}
&\widetilde{v}_{11}=v_{\xi\xi},\\
&\widetilde{v}_{2}=b^{\frac{-\alpha}{\alpha+2}}\left(\frac{\alpha+2}{2}x_2\right)^{\frac{-\alpha}{\alpha+2}}v_\eta,\\
&\widetilde{v}_{22}=-\frac{\alpha}{\alpha+2}x_2^{-1}\widetilde{v}_{2}+(a+b\eta)^{-\alpha}v_{\eta\eta}.
\end{align*}
$\eta=0$ gives us that $x_2=\frac{2}{b(\alpha+2)}a^{\frac{\alpha+2}{2}}$. Denote $l=\frac{2}{b(\alpha+2)}a^{\frac{\alpha+2}{2}}$.
Hence by \eqref{eq:MA-d-hf-tr-v}, we know that $\tilde v\geq -1$ solves
\begin{equation*}
\left\{
\begin{aligned}
\widetilde{v}_{11}+\widetilde{v}_{22}+\frac{\alpha}{\alpha+2}x_2^{-1}\widetilde{v}_{2}&=0&&\text{ in }\mathbb R\times (l,+\infty),\\[3pt]
\widetilde{v}(x_1,0)&=0&&\text{ on }\mathbb R\times\{x_2=l\},
\end{aligned}
\right.
\end{equation*}
i.e.,
\begin{equation*}
\left\{
\begin{aligned}
\operatorname{div}\left(x_2^{\frac{\alpha}{\alpha+2}}\nabla \widetilde{v}\right)&=0&&\text{ in }\mathbb R\times (l,+\infty),\\[3pt]
\widetilde{v}(x_1,0)&=0&&\text{ on }\mathbb R\times\{x_2=l\}.
\end{aligned}
\right.
\end{equation*}
Applying Theorem \ref{thm:liou-div-hf} with $n=2$ and $a=\frac{\alpha}{\alpha+2}<1$, we know that $\widetilde{v}(x_1,x_2)=C_*\left({x_2}^{\frac{2}{\alpha+2}}-l^{\frac{2}{\alpha+2}}\right)$ for some nonnegative constant $C_*$. Transforming back to $(\xi,\eta)$, we have $v(\xi,\eta)=A\eta$ for some $A\geq 0$, i.e.,
$u_{\xi \xi}^{\star}(\xi, \eta)=1+A \eta$.
Then
\[
u^{\star}(\xi, \eta)=h_{1}(\eta)+\xi h_{2}(\eta)+\frac{\xi^{2}}{2}(1+A \eta)
\]
for some functions $h_{1}, h_{2}:[0,+\infty) \rightarrow \mathbb{R}$. Recalling \eqref{eq:MA-d-hf-tr}, we have $h_{1}(0)=h_{2}(0)=0$ and
\[
 h_{1}^{\prime \prime}(\eta)+\xi h_{2}^{\prime \prime}(\eta)+(1+A \eta)(a+b\eta)^{\alpha}=0\]
 on $\mathbb{R} \times(0,+\infty)$.
This implies that $h_{1}^{\prime \prime}(\eta)+(1+A \eta)(a+b\eta)^{\alpha}=0$ and $h_{2}^{\prime \prime}(\eta)=0$. By solving the ODEs, we obtain
\begin{equation*}
u^\star(\xi,\eta)=\left\{
\begin{aligned}
&B\eta-\frac{(b-aA)(a+b\eta)^{2+\alpha}}{b^3(1+\alpha)(2+\alpha)}-\frac{A(a+b\eta)^{3+\alpha}}{b^3(2+\alpha)(3+\alpha)}+C\xi\eta\\
&\quad+\frac{(b-aA)a^{2+\alpha}}{b^3(1+\alpha)(2+\alpha)}+\frac{Aa^{3+\alpha}}{b^3(2+\alpha)(3+\alpha)}+\frac{\xi^{2}}{2}(1+A \eta),&&\alpha\neq -1;\\[8pt]
&B\eta-\frac{b-aA}{b^3}(a+b\eta)\ln(a+b\eta)-\frac{A}{2}\eta^2+C\xi\eta\\
&\quad\quad\quad\quad\quad\quad\quad\quad\quad\,\,+\frac{(b-aA)a\ln a}{b^3}+\frac{\xi^{2}}{2}(1+A \eta),&&\alpha= -1,
\end{aligned}
\right.
\end{equation*}
for some constants $B, C \in \mathbb{R}$. Recalling that the Legendre transform is an involution on convex functions, we recover $u$ by taking the partial Legendre transform of $u^{\star}$ :
\begin{equation*}
u(x,y)=\left\{
\begin{aligned}
&\frac{(b-aA)(a+by)^{2+\alpha}}{b^3(1+\alpha)(2+\alpha)}+\frac{A(a+by)^{3+\alpha}}{b^3(2+\alpha)(3+\alpha)}-By\\
&\quad-\frac{(b-aA)a^{2+\alpha}}{b^3(1+\alpha)(2+\alpha)}-\frac{Aa^{3+\alpha}}{b^3(2+\alpha)(3+\alpha)}+\frac{(x-Cy)^2}{2(1+Ay)},&&\alpha\neq -1;\\[8pt]
&\frac{b-aA}{b^3}(a+by)\ln(a+by)+\frac{A}{2b}y^2-By\\
&\quad\quad\quad\quad\quad\quad\quad\quad\quad\,\,-\frac{(b-aA)a\ln a}{b^3}+\frac{(x-Cy)^2}{2(1+Ay)},&&\alpha= -1.
\end{aligned}
\right.
\end{equation*}
This gives us a complete classification of all solutions to \eqref{eq:MA-d-hf}.
\end{proof}

\begin{rem}\label{rem:sharp}
$\alpha>-2$  in Theorem \ref{thm:MA-d-hf} is sharp since \eqref{eq:MA-d-hf} has no convex solutions continuous up to boundary in $\mathbb R^2_+$ when $\alpha\leq -2$. Indeed, if there exists a convex function $u\in C^2({\mathbb R_+^2})\cap C(\overline{\mathbb R_+^2})$ solves \eqref{eq:MA-d-hf}, by \cite[Theorem 5.1]{S2}, we will have a Pogorelov type estimate
$$(1-u)u_{xx}\leq C(\max|u_x|)$$
in $S_1$, where $S_h=\{x\in\mathbb R^2_+:u(x)<u(0)+\nabla u(0)\cdot x+h\}$ for $h>0$. Since $u(x,0)=\frac{1}{2}x^2 \text{on }\partial\mathbb R^2_+$, we know that $|u_x|$ is bounded in $S_1$ (depends on $\|u\|_{L^\infty(S_2)}$). Then there exists a small $c_0>0$ such that $u_{xx}\leq C(\|u\|_{L^\infty(S_2)})$ in $B_{c_0}^+$. Hence, we have
\begin{align*}
Cu_{yy}\geq u_{xx}u_{yy}\geq u_{xx}u_{yy}-u_{xy}^2=y^\alpha \text{ in } B_{c_0}^+,
\end{align*}
i.e. in $B_{c_0}^+$. Then it holds
$$u(x,y)\geq \left\{\begin{aligned}
&\frac{1}{C(1+\alpha)(2+\alpha)}y^{2+\alpha}+D(x)y+E(x),&&\alpha<-2,\\[4pt]
&-\frac{1}{C}\ln y+D(x)y+E(x),&&\alpha=-2,
\end{aligned}\right.$$
which means that
$\displaystyle\lim_{y\to 0+}u(x,y)=+\infty$. This
contradicts with $u\in C(\overline{\mathbb R_+^2})$. 
\end{rem}

\vskip 10pt

\subsection{The  case of Neumann boundary value}\label{sect:pf-thm-3}

We prove Theorem \ref{thm:Liou-d-hf-ne} in this section. 

\begin{proof}[Proof of Theorem \ref{thm:Liou-d-hf-ne}]
For $\alpha=0$, it has been proved by Jian and Tu \cite[Theorem 1.1]{JT}. In the following, we mainly prove the case for $\alpha>0$ .

Similarly as in the last section, we know that  if $u$
is a solution to  \eqref{eq:MA-d-hf-ne}, $u^\star$ is a solution to
\begin{equation}\label{eq:MA-d-hf-tr-ne}
\left\{
\begin{aligned}
\eta^{\alpha}u_{\xi \xi}^{\star}+u_{\eta\eta}^{\star}&=0&&\text{ in }\mathbb R\times (0,+\infty),\\[4pt]
u^{\star}_\eta(\xi,0)&=0&&\text{ on }\mathbb R  \times \{0\}.
\end{aligned}
\right.
\end{equation}
Denote $v:=u^\star_{\xi\xi}$.
Differentiating \eqref{eq:MA-d-hf-tr-ne} twice respect to $\xi$, we have that $v> 0$ solves
\begin{equation}\label{eq:MA-d-hf-tr-v-ne}
\left\{
\begin{aligned}
\eta^{\alpha}v_{\xi \xi}+v_{\eta\eta}&=0&&\text{ in }\mathbb R\times (0,+\infty),\\
v_\eta(\xi,0)&=0&&\text{ on }\mathbb R\times\{\eta=0\}.
\end{aligned}
\right.
\end{equation}
Let $\xi=x_1$, $\eta=\left(\frac{\alpha+2}{2}\right)^{\frac{2}{\alpha+2}}x_2^{\frac{2}{\alpha+2}}$ and 
\[\widetilde{v}(x_1,x_2)=v\left(x_1,\left(\frac{\alpha+2}{2}\right)^{\frac{2}{\alpha+2}}x_2^{\frac{2}{\alpha+2}}\right).\]
Then \eqref{eq:MA-d-hf-tr-v-ne} gives us that $\tilde v> 0$ solves
\begin{equation*}
\left\{
\begin{aligned}
\widetilde{v}_{11}+\widetilde{v}_{22}+\frac{\alpha}{\alpha+2}x_2^{-1}\widetilde{v}_{2}&=0&&\text{ in }\mathbb R\times (0,+\infty),\\
x_2^{\frac{\alpha}{\alpha+2}}\widetilde{v}_2(x_1,0)&=0&&\text{ on }\mathbb R\times\{x_2=0\},
\end{aligned}
\right.
\end{equation*}
i.e.,
\begin{equation*}
\left\{
\begin{aligned}
\operatorname{div}\left(x_2^{\frac{\alpha}{\alpha+2}}\nabla \widetilde{v}\right)&=0&&\text{ in }\mathbb R\times (0,+\infty),\\
x_2^{\frac{\alpha}{\alpha+2}}\widetilde{v}_2(x_1,0)&=0&&\text{ on }\mathbb R\times\{x_2=0\}.
\end{aligned}
\right.
\end{equation*}
Applying Theorem \ref{thm:liou-div-hf-ne} with $n=2$ and $a=\frac{\alpha}{\alpha+2}\in(0,1)$, we know that $\widetilde{v}=C$ for some positive constant $C$. Transforming back to $(\xi,\eta)$, we have $v(\xi,\eta)=A$ for some $A> 0$, i.e., $u_{\xi \xi}^{\star}(\xi, \eta)=A$
 for some $A > 0$.
Then
\[
u^{\star}(\xi, \eta)=h_{1}(\eta)+\xi h_{2}(\eta)+\frac{A}{2}\xi^{2}
\]
for some functions $h_{1}, h_{2}:[0,+\infty) \rightarrow \mathbb{R}$. Recalling \eqref{eq:MA-d-hf-tr-ne}, we have
\[
h'_{1}(0)=h'_{2}(0)=0 \quad \text {and} \quad h_{1}^{\prime \prime}(\eta)+\xi h_{2}^{\prime \prime}(\eta)+A\eta^{\alpha}=0 \quad \text {on } \mathbb{R} \times(0,+\infty).
\]
This implies that $h_{1}^{\prime \prime}(\eta)+A\eta^{\alpha}=0$ and $h_{2}^{\prime \prime}(\eta)=0$. By solving these ODEs, we obtain
\begin{equation*}
u^\star(\xi,\eta)=\frac{A}{2}\xi^{2}+B\xi-\frac{A\eta^{2+\alpha}}{(1+\alpha)(2+\alpha)}+C
\end{equation*}
for some constants $A, C \in \mathbb{R}$. Recalling that $u=(u^{\star})^\star$, we have
\begin{equation*}
u(x,y)=\frac{1}{2A}(x-B)^2+\frac{Ay^{2+\alpha}}{(1+\alpha)(2+\alpha)}-C,
\end{equation*}
which yields \eqref{eq:s-MA-d-hf-ne}.
\end{proof}

\vskip 10pt

\subsection{The entire space case}\label{sect:pf-thm-1}

Before proving Theorem \ref{thm:MA-d}, we first recall two theorems for \eqref{eq:MA-d} in \cite{JX}. 
\begin{thm}[{\cite[Theorem 4.1]{JX}}]\label{thm:thm-4.1}
Let $\Omega$ be an open convex set in $\mathbb{R}^2$, and $u$ be the generalized solution of
\[
\operatorname{det} D^2 u(x)=\left|y\right|^\alpha\quad\text {in } \Omega \text {, }
\]
with $u=0$ on $\partial \Omega$. Then $u$ is strictly convex in $\Omega$, and $u \in C_{\text {loc }}^{1, \delta}(\Omega)$ for some $\delta>0$ depending only on $\alpha$. Furthermore, the partial Legendre transform $u^\star$ of $u$ is a strong solution of
\[
|\eta|^{\alpha}u^\star_{\xi\xi}+u^\star_{\eta\eta}=0\quad\text{in }\mathcal{P}(\Omega),
\]
where the map $\mathcal{P}$ is given in \eqref{eq:p-Le-tr}.
\end{thm}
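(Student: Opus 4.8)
The plan is to isolate the degenerate line $\{y=0\}$, on which $\det D^2u=|y|^\alpha$ becomes degenerate (when $\alpha>0$) or singular (when $\alpha<0$), and to treat everything off this line by the classical Monge-Amp\`ere theory. One may assume $\alpha>-1$, so that $|y|^\alpha\in L^1_{\mathrm{loc}}$ and the Monge-Amp\`ere measure $\mu$ of $u$, which equals $|y|^\alpha\,dx\,dy$, is absolutely continuous and strictly positive a.e.; after a localization onto sections of $u$ one may also take $\Omega$ bounded. I would then prove, in order: (a) $u\in C^1(\Omega)$; (b) $u$ is strictly convex; (c) $u\in C^{1,\delta}_{\mathrm{loc}}(\Omega)$ with $\delta=\delta(\alpha)$; and (d) the assertion about $u^\star$.

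For (a): absolute continuity of $\mu$ rules out a two-dimensional subdifferential at any interior point (which would give $\mu$ an atom) and a ``fold'' of $u$ along a segment across which the transversal slope varies (which would put a singular part of $\mu$ on a line); the only surviving possibility, a fold along a segment with locally constant subdifferential, would force $\det D^2u\le 0$ in a full neighbourhood of that segment, contradicting $|y|^\alpha>0$ off $\{y=0\}$ together with the equation on $\{y=0\}$. Hence $u\in C^1$. For (b): suppose $u$ agrees with a supporting affine function $\ell$ on a maximal segment $\Sigma$ having at least two points, and normalize so that $u\ge 0=u|_\Sigma$ and, by (a), $\nabla u=0$ on $\mathrm{int}\,\Sigma$. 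If $\Sigma$ contained a point $q$ with $y(q)\ne 0$, then $\det D^2u\ge\lambda_r>0$ on a small ball $B_r(q)$, so by the two-dimensional strict-convexity theorem for Monge-Amp\`ere equations with a positive lower bound on $\det D^2u$ (Alexandrov; Caffarelli) the function $u$ would be strictly convex near $q$, which is impossible; hence $\Sigma\subset\{y=0\}$. The case in which $\Sigma$ is a full chord of $\Omega$ is excluded directly by convexity (it forces $u\equiv 0$ on an open strip, hence $\mu\equiv 0$ there), so $\Sigma$ has an endpoint $p_0=(x_0,0)$ in the interior of $\Omega$. To exclude this I would compare $u$ with the explicit one-parameter family of exact, strictly convex solutions
\[
\psi_\varepsilon(x,y)=\frac{\varepsilon}{2}(x-x_0)^2+\frac{|y|^{2+\alpha}}{\varepsilon(1+\alpha)(2+\alpha)},\qquad \det D^2\psi_\varepsilon=|y|^\alpha,
\]
feeding $\psi_\varepsilon$, suitably centred and rescaled, into the comparison principle for $\det D^2(\cdot)=|y|^\alpha$ on a carefully chosen neighbourhood of $p_0$ so as to force $u>0$ at interior points of $\Sigma$; a blow-up at $p_0$ along the dilations preserving the equation is an alternative. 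This is the step I expect to be the main obstacle.

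Granting strict convexity, (c) is classical on compact subsets of $\Omega\setminus\{y=0\}$, where $|y|^\alpha$ is smooth and bounded below by a positive constant, so $u$ is $C^{1,\beta}$, indeed $C^\infty$, there by Caffarelli's theory; near $\{y=0\}$ one obtains uniform $C^{1,\delta}$ estimates, with $\delta$ depending only on $\alpha$, from the affine rescalings that preserve the equation together with a compactness argument and the family $\psi_\varepsilon$; equivalently, one checks that $|y|^\alpha\,dx\,dy$ is a doubling Monge-Amp\`ere measure with constant depending only on $\alpha$ and invokes the Caffarelli--Guti\'errez theory \cite{CG}. For (d): strict convexity together with (a) makes $\mathcal{P}:(x,y)\mapsto(u_x,y)$ a homeomorphism of $\Omega$ onto $\mathcal{P}(\Omega)$, and by the identities \eqref{eq:part-le-d} and the second-order ones following them, wherever $u\in C^2$ (that is, on $\{y\ne 0\}$) the transform $u^\star$ is $C^2$ and solves $|\eta|^\alpha u^\star_{\xi\xi}+u^\star_{\eta\eta}=0$ classically. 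To upgrade this to a strong solution across $\{\eta=0\}$ I would prove $u^\star\in W^{2,p}_{\mathrm{loc}}$ from $u^\star_{\xi\xi}=1/u_{xx}$, $u^\star_{\xi\eta}=-u_{xy}/u_{xx}$ and $u^\star_{\eta\eta}=-|y|^\alpha/u_{xx}$, using two-sided control of $D^2u$ on sections: upper bounds from the Pogorelov/Caffarelli $W^{2,p}$ estimates for $\det D^2u=|y|^\alpha$, uniform up to $\{y=0\}$ by affine invariance, and the lower bound $u_{xx}\ge c>0$ near $\{y=0\}$ coming by comparison from below with the $\psi_\varepsilon$ (which are uniformly convex in $x$). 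The equation then holds a.e., so $u^\star$ is a strong solution. Apart from the strict-convexity step, the place requiring most care is making all of these estimates uniform as one approaches $\{y=0\}$, that is, controlling the geometry of the sections of $u$ centred on or near that line.
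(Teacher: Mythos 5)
The paper does not prove this statement; it is quoted verbatim from \cite[Theorem~4.1]{JX} and used as a black box in Section~\ref{sect:pf-thm-1}. There is therefore no argument in the paper to compare against, so the only question is whether your sketch would close the result on its own. The overall shape is right --- rule out large subdifferentials/folds, exclude flat segments via extension to the boundary plus barriers, get $C^{1,\delta}$ from the doubling Monge--Amp\`ere/Caffarelli--Guti\'errez machinery, and transfer regularity through the partial Legendre transform --- and it plausibly parallels the argument in \cite{JX}.

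The genuine gap is the one you flag yourself: excluding a maximal flat segment $\Sigma\subset\{y=0\}$ with an interior endpoint $p_0$. Caffarelli's segment-extension argument requires a positive lower bound on $\det D^2 u$ near $\Sigma$, which fails precisely on $\{y=0\}$ when $\alpha>0$, so the classical theory does not apply and this case is the whole content of the strict-convexity claim. ``Compare with $\psi_\varepsilon$'' is the right kind of move, but you have not said what domain the comparison is run on, in which direction the inequality goes, nor how the anisotropic $\alpha$-dependent scaling of the sections $S_h(p_0)$ enters; in its current form this is an intention, not a proof. Note also that the same quantitative control of sections centred on $\{y=0\}$ is what you would need in step (d) for the lower bound $u_{xx}\geq c>0$ near $\{y=0\}$ (a lower bound on $D^2u$ does not follow from $W^{2,p}$ estimates), so steps (b) and (d) are really blocked by the same missing piece.

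One smaller issue: in step (a), the claim that a fold ``with locally constant subdifferential would force $\det D^2u\le 0$ in a full neighbourhood'' is not correct as stated; a crease along a segment does not by itself control the Monge--Amp\`ere measure off the crease. It is cleaner (and presumably what \cite{JX} do) to prove strict convexity first and then let $C^{1}$ and $C^{1,\delta}$ both follow from strict convexity together with the doubling property of the measure $|y|^\alpha\,dx\,dy$; since $C^1$ is subsumed by $C^{1,\delta}$, the separate step (a) is unnecessary.
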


\begin{thm}[{\cite[Theorem 4.2]{JX}}]\label{thm:thm-4.2}
Let $u$ be a generalized solution of
\[
\operatorname{det} D^2 u(x)=\left|y\right|^\alpha\quad\text {in } \mathbb R^2 .
\] 
Then $u$ is strictly convex.
\end{thm}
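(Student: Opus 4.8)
The plan is to argue by contradiction, studying the set on which $u$ coincides with a supporting plane and reducing — via the symmetries of the equation and interior strict convexity away from the degeneracy line — to a single borderline configuration, which is then excluded using Theorem \ref{thm:thm-4.1}.

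\emph{Set-up and reduction.} Suppose $u$ is not strictly convex. Then there is an affine function $\ell$ with $u\ge\ell$ on $\mathbb R^2$ and $\Sigma:=\{u=\ell\}$ containing more than one point; subtracting $\ell$ and translating in the $x$-variable (both of which preserve $\det D^2u=|y|^\alpha$), we may assume $\ell\equiv 0$, $u\ge 0$, and $0\in\Sigma$. The set $\Sigma$ is closed and convex, and it cannot have nonempty interior, since otherwise the Monge--Amp\`ere measure would vanish on an open set, contradicting $\mu_u=|y|^\alpha\,dx\,dy$ with $|y|^\alpha>0$ a.e.\ (local integrability uses $\alpha>-1$). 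Hence $\Sigma$ lies on a line: it is a segment, a ray, or an entire line. Moreover, on any ball $B\Subset\{y\neq 0\}$ the right-hand side is bounded between positive constants, so $u|_B$ solves a non-degenerate Monge--Amp\`ere equation and is therefore strictly convex by the classical two-dimensional interior strict convexity theorem (Alexandrov); consequently $\Sigma$ contains no subsegment disjoint from $\{y=0\}$, and being one-dimensional it must be of the form $\Sigma=I\times\{0\}$ for a nondegenerate interval $I$, which after a further translation we take to have $0\in I^\circ$.

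\emph{The bounded case.} Suppose first that $I$ is bounded. Then every sublevel set $S_h:=\{u<h\}$, $h>0$, is bounded: an unbounded $S_h$ would have a nonzero recession direction $v$, and since $0\in S_h$ this would give $u(tv)\le h$ for all $h>0$, hence $u\equiv 0$ on the ray $\{tv:t\ge 0\}$; then $\Sigma$ would contain this ray together with $I^\circ\times\{0\}$, a two-dimensional set when $v$ is not horizontal (impossible) and forcing $I$ to be unbounded when $v$ is horizontal (also impossible). Fixing such a bounded section $\Omega=S_h$, the function $u-h$ is a convex generalized solution of $\det D^2(u-h)=|y|^\alpha$ on the bounded convex domain $\Omega$ with $u-h=0$ on $\partial\Omega$, so Theorem \ref{thm:thm-4.1} yields that $u-h$ is strictly convex in $\Omega$; but $I\times\{0\}\subseteq\Omega$ and $u$ is affine there, a contradiction. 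Hence $I$ cannot be bounded.

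\emph{The unbounded case — the main obstacle.} It remains to exclude the possibility that $I$ is a ray or the whole $x$-axis, i.e.\ that $u$ is affine along an unbounded segment of the degeneracy line; this is the heart of the matter, since such a flat piece is stable under all the natural symmetries and no section containing it is bounded, so Theorem \ref{thm:thm-4.1} does not apply directly. The plan here is: (i) using the scaling $u_\lambda(x,y)=\lambda^{-\alpha/2}u(x/\lambda,\lambda y)$ (which preserves the equation) together with $x$-translations, normalize and pass to a limiting solution with $u(\cdot,0)\equiv 0$; (ii) on the upper half-plane $u$ is strictly convex and, by Caffarelli's regularity, $C^1$ up to $\{y=0\}$, so $u(\cdot,0)\equiv 0$ forces $u_x(x,y)\to 0$ as $y\to 0^+$ for every $x$, meaning the flat set propagates and $u$ becomes asymptotically independent of $x$ near the axis; (iii) this makes $\det D^2u$ tend to zero strictly faster than $|y|^\alpha$ as $y\to 0$ — equivalently, the partial Legendre transform $u^\star$, a solution of $y^\alpha u^\star_{\xi\xi}+u^\star_{\eta\eta}=0$, has an image domain that pinches to a point as $\eta\to 0$ — which is incompatible with the equation. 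An alternative route is to cut $u$ down to a bounded convex subdomain on which the boundary trace is affine and again invoke Theorem \ref{thm:thm-4.1}. I expect this exclusion of a flat segment lying exactly on the degeneracy line to be the real content of the proof; the earlier steps are comparatively soft.
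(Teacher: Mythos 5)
First, note that the paper does not actually prove this statement: it is quoted verbatim from Jin--Xiong \cite[Theorem 4.2]{JX} and used as a black box, so there is no in-paper argument to compare yours with; your attempt has to be judged against what a complete proof would require.

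Your reduction and your ``bounded case'' are essentially sound: the contact set $\Sigma$ of a supporting plane has empty interior (positive Monge--Amp\`ere measure, using $\alpha>-1$), it cannot contain a segment avoiding $\{y=0\}$ by two-dimensional interior strict convexity where the right-hand side is pinched between positive constants, hence $\Sigma=I\times\{0\}$; and if $I$ is a bounded nondegenerate segment, the recession argument shows the sections $S_h$ are bounded, so applying Theorem \ref{thm:thm-4.1} to $u-h$ on $S_h$ gives the contradiction. (Incidentally, the case $I=\mathbb R$ is also easy and you did not need to lump it with the hard case: if $u$ vanishes on the whole line $\{y=0\}$ and $u\ge 0$, a standard convexity argument shows $u$ depends on $y$ alone, so its gradient image lies in a line and $\mu_u\equiv 0$, contradicting $\mu_u=|y|^\alpha\,dx\,dy$.)

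The genuine gap is the ray case, i.e.\ $\Sigma$ a half-line in $\{y=0\}$, and you acknowledge yourself that you only have a plan there, not a proof. As written, the plan does not go through: (i) you invoke ``Caffarelli's regularity, $C^1$ up to $\{y=0\}$'' for the entire solution, but no such boundary regularity is available a priori for the degenerate/singular equation --- in \cite{JX} the $C^{1,\delta}$ regularity is itself a consequence of Theorem \ref{thm:thm-4.1}, which applies only to zero-boundary-value problems on (bounded) sections, exactly the objects you lack when $\Sigma$ contains a ray (every section through the contact set is then unbounded); moreover your scaling/blow-down step (i) needs a compactness statement that is never formulated. (ii) The assertion that ``$u$ becomes asymptotically independent of $x$ near the axis'' and hence ``$\det D^2u$ tends to zero strictly faster than $|y|^\alpha$'' is not justified by anything quantitative, and it is precisely the kind of statement that requires the strict convexity or section geometry you are trying to establish. (iii) The proposed alternative --- ``cut $u$ down to a bounded convex subdomain on which the boundary trace is affine and again invoke Theorem \ref{thm:thm-4.1}'' --- is not viable: Theorem \ref{thm:thm-4.1} needs $u$ to vanish identically on the boundary of the domain, and on the boundary of an artificially truncated bounded subdomain the trace of $u$ is not affine, nor can it be made so by subtracting a single affine function unless the subdomain is itself a section, which is unbounded here. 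So the heart of the theorem --- excluding a flat half-line sitting exactly on the degeneracy line, which is where an extreme point of the contact set meets the degenerate set and the classical extreme-point exclusion arguments fail --- remains unproved in your proposal.
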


Hence Theorem \ref{thm:thm-4.1} and Theorem \ref{thm:thm-4.2} give us that $u$ is strictly convex, and then $u$ is smooth away from $\left\{y=0\right\}$. Furthermore, we know that $u \in C_{\text {loc }}^{1, \delta}\left(\mathbb{R}^2\right)$ and the partial Legendre transform $u^\star$ of $u$ is a strong solution of \eqref{eq:MA-d-tr}.

Next, we need a Liouville theorem for degenerate elliptic equations in divergence form. This theorem is a partial extension of \cite[Corollary 1.4]{WZ}, where they assumed stronger conditions.
\begin{thm}\label{thm:liou-div}
Assume that $n = 2$ and $a\geq 0$. Then any positive $C^1(\mathbb R^2)$ solution to
\begin{equation}\label{eq:ind-a}
\operatorname{div}\left(|x_2|^a\nabla u\right)=0\quad\text{in }\mathbb R^2
\end{equation}
is a constant function.
\end{thm}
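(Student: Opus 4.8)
The plan is to reduce equation \eqref{eq:ind-a} on all of $\mathbb{R}^2$ to the half-space Liouville theorem already proved, namely Theorem \ref{thm:liou-div-hf}, by exploiting the symmetry of the weight $|x_2|^a$ across the line $\{x_2=0\}$ together with a normalization of $u$. First I would observe that the weight $|x_2|^a$ is even in $x_2$, so if $u(x_1,x_2)$ is a solution then so is $u(x_1,-x_2)$, and hence the even part $\bar u(x_1,x_2)=\tfrac12\bigl(u(x_1,x_2)+u(x_1,-x_2)\bigr)$ is again a positive $C^1$ solution, symmetric across $\{x_2=0\}$. Because $u\in C^1(\mathbb{R}^2)$ and is even in $x_2$, one has $\partial_{x_2}\bar u(x_1,0)=0$, so $\bar u$ restricted to the upper half plane $\mathbb{R}^2_+$ is a positive solution of $\operatorname{div}(x_2^a\nabla\bar u)=0$ satisfying the Neumann condition $x_2^a\partial_{x_2}\bar u(x_1,0)=0$.

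Next I would aim to apply a Neumann-type Liouville statement to $\bar u$. For $a\in[0,1)$ this is exactly Theorem \ref{thm:liou-div-hf-ne} (with $n=2$, so the constraint $\max\{-1,2-n\}=0\le a<1$ is met), which forces $\bar u\equiv C$ for some positive constant $C$. For $a\ge 1$ the cited Neumann theorem does not directly apply, so I would instead argue by hand: on $\mathbb{R}^2_+$ the function $\bar u$ is a bounded-below (indeed positive) solution of the degenerate equation, and the one-variable solutions of the associated ODE $(x_2^a v')'=0$ are $v=c_1+c_2x_2^{1-a}$; when $a\ge 1$ the nonconstant solution $x_2^{1-a}$ blows up as $x_2\to0^+$, which is incompatible with $\bar u$ being continuous (in fact $C^1$) up to the boundary with a Neumann condition there. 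More cleanly, one can reflect: since $\bar u$ is even in $x_2$ and solves the equation on each half plane with matching $C^1$ data and a weight that degenerates on $\{x_2=0\}$, $\bar u$ is a global weak solution of $\operatorname{div}(|x_2|^a\nabla\bar u)=0$ on $\mathbb{R}^2$; a Harnack inequality / scaling argument for this $A_2$-weight (the weight $|x_2|^a$ lies in the Muckenhoupt class $A_2$ for $-1<a<1$, and for $a\ge 1$ one uses the degenerate-elliptic theory on a still larger scale) shows any positive global solution is constant. I would then note that once $\bar u\equiv C$, the odd part $\tilde u(x_1,x_2)=\tfrac12(u(x_1,x_2)-u(x_1,-x_2))$ is a $C^1$ solution of \eqref{eq:ind-a}, odd in $x_2$, hence vanishing on $\{x_2=0\}$; restricted to $\mathbb{R}^2_+$ it satisfies the Dirichlet problem of Theorem \ref{thm:liou-div-hf} with $l=0$, but without a sign condition. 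To recover the sign condition, use $u=\bar u+\tilde u=C+\tilde u>0$, so $\tilde u>-C$ on $\mathbb{R}^2_+$; Theorem \ref{thm:liou-div-hf} then gives $\tilde u=C_*(x_2^{1-a}-0)$ on $\mathbb{R}^2_+$ for some $C_*\ge0$, and $C_*=0$ when $a\ge1$. For $0\le a<1$ with $C_*>0$ the candidate $\tilde u=C_*x_2^{1-a}$ must be odd and $C^1$ across $\{x_2=0\}$; oddness forces the continuation $-C_*|x_2|^{1-a}\operatorname{sgn}(x_2)$, which fails to be $C^1$ at $x_2=0$ unless $C_*=0$. Hence $\tilde u\equiv0$ and $u\equiv C$.

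The main obstacle I anticipate is the case $a\ge 1$: the clean tools quoted in the excerpt (Theorem \ref{thm:liou-div-hf-ne} requires $a<1$, and Theorem \ref{thm:liou-div-hf} only yields $C_*=0$ in that range for the Dirichlet side) do not cover the Neumann/symmetric reduction for large $a$. The resolution is that for $a\ge 1$ the degenerate ODE has \emph{no} bounded nonconstant solution near $x_2=0$, so the only way a positive $C^1$ global solution can behave is to be constant; making this rigorous requires either a direct barrier/ODE comparison argument exploiting $C^1$ regularity up to $\{x_2=0\}$, or invoking the degenerate De Giorgi--Nash--Moser theory for the weight $|x_2|^a$ (which is still locally integrable for every $a\ge0$, so weak solutions are Hölder continuous and a Harnack inequality on dyadic annuli plus the growth estimate forces constancy). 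A secondary, more routine point is the verification that the even/odd decomposition is legitimate in the $C^1$ category and that the pieces are genuine weak — hence, by regularity away from $\{x_2=0\}$, classical — solutions; this follows as in the integration-by-parts computation already carried out in the proof of Theorem \ref{thm:liou-div-hf}.
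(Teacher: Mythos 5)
Your approach — decomposing $u$ into its even and odd parts across $\{x_2=0\}$ and feeding each piece to the half-space theorems — is genuinely different from the paper's. The paper simply re-runs the moving-sphere argument of Theorem~\ref{thm:liou-div-hf} in the weak setting on all of $\mathbb R^2$ (Kelvin transforms centered on $\{x_2=0\}$ preserve the equation by \cite{YD}, the maximum principle gives $u_{x,\lambda}\le u$ outside $B_\lambda(x)$, Lemmas~\ref{lem:moving-sphere-hf}/\ref{lem:ms-ln} then force $u=u(x_2)$, and the resulting ODE $(|x_2|^a u')'=0$ has no non-constant positive $C^1$ solutions). Your reduction is attractive because it shifts the entire burden onto previously established half-space results, but it has a real gap.

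The gap is the even part for $a\ge 1$. You correctly note that Theorem~\ref{thm:liou-div-hf-ne} requires $a<1$, but the two remedies you offer do not close the hole. The ODE remark is not a proof: knowing that the pure-$x_2$ solutions of $(x_2^a v')'=0$ blow up at $x_2=0$ when $a\ge 1$ says nothing unless you have already shown $\bar u$ depends on $x_2$ alone, which is the whole difficulty. The second remedy — ``$\bar u$ is a global weak solution of $\operatorname{div}(|x_2|^a\nabla\bar u)=0$, and a Harnack/scaling argument shows any positive global solution is constant'' — is circular, since that statement is precisely Theorem~\ref{thm:liou-div} applied to $\bar u$. Moreover the $A_2$ justification you gesture at is not available here: $|x_2|^a\in A_2(\mathbb R^2)$ if and only if $-1<a<1$, so for $a\ge 1$ the Fabes--Kenig--Serapioni Harnack theory does not apply, and ``degenerate-elliptic theory on a still larger scale'' would need to be made precise. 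By contrast, the paper's moving-sphere route handles all $a\ge 0$ uniformly, since the sphere argument only needs the Kelvin-invariance of the equation and the maximum principle, both of which hold regardless of $a$.

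A smaller point: in the odd part, your $C^1$-matching argument breaks at $a=0$, where the candidate $\tilde u=C_*x_2$ is perfectly $C^1$ across $\{x_2=0\}$; there you must instead invoke positivity of $u=C+\tilde u$ on all of $\mathbb R^2$ to force $C_*=0$. (You also have a stray sign in the odd continuation: oddness gives $\tilde u=C_*\operatorname{sgn}(x_2)|x_2|^{1-a}$, not $-C_*\operatorname{sgn}(x_2)|x_2|^{1-a}$, though this does not affect the $C^1$ obstruction for $0<a<1$.) With these repairs, your decomposition is a valid alternative proof for $0\le a<1$, but for $a\ge 1$ you would need either the paper's moving-sphere argument or an independent Liouville-type input that you have not supplied.
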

\begin{proof}
Note that $u$ only belongs to $C^1(\mathbb R^2)$. We need to prove this theorem in the weak sense.
Hence we can repeat the same process as in the proof of Theorem \ref{thm:liou-div-hf}. Due to its similarity, we omit the details here.
\end{proof}

Now, we are ready to give the proof of Theorem \ref{thm:MA-d}.

\begin{proof}[Proof of Theorem \ref{thm:MA-d}] Our proof only works for the case $\alpha\geq 0$.
We consider the partial Legendre transform $u^\star$ of $u$. $u^\star$
is a solution to 
\begin{equation}\label{eq:MA-d-tr}
|\eta|^{\alpha}u^\star_{\xi\xi}+u^\star_{\eta\eta}=0\quad\text{in }\mathbb R^2.
\end{equation}
Let $v:=u^\star_{\xi\xi}\geq 0$. Differentiating \eqref{eq:MA-d-tr} twice respect to $\xi$, we have that $v\geq 0$ solves
\begin{equation}\label{eq:MA-d-tr-v}
|\eta|^{\alpha}v_{\xi\xi}+v_{\eta\eta}=0\quad\text{in }\mathbb R^2.
\end{equation}
By a change of variables, we let
$$\widetilde{v}(x_1,x_2)=\left\{\begin{aligned}
&v\left(x_1,\left(\frac{\alpha+2}{2}\right)^{\frac{2}{\alpha+2}}x_2^{\frac{2}{\alpha+2}}\right),&&\eta\geq 0,\\[5pt]
&v\left(x_1,-\left(\frac{\alpha+2}{2}\right)^{\frac{2}{\alpha+2}}(-x_2)^{\frac{2}{\alpha+2}}\right),&&\eta<0.
\end{aligned}
\right.$$
A direct calculation yields
\begin{align*}
\widetilde{v}_{11}&=v_{\xi\xi},\\
\widetilde{v}_{22}&=-\frac{\alpha}{\alpha+2}x_2^{-1}\widetilde{v}_{2}+|\eta|^{-\alpha}v_{\eta\eta}.
\end{align*}
By \eqref{eq:MA-d-tr-v}, we know that $\widetilde{v}\geq 0$ solves
$$\widetilde{v}_{11}+\widetilde{v}_{22}+\frac{\alpha}{\alpha+2}x_2^{-1}\widetilde{v}_{2}=0\quad\text{in }\mathbb R^2,$$
i.e.,
$$\operatorname{div}\left(|x_2|^{\frac{\alpha}{\alpha+2}}\nabla \widetilde{v}\right)=0\quad\text{in }\mathbb R^2.$$
Hence, by Theorem \ref{thm:liou-div} with $a=\frac{\alpha}{\alpha+2}\geq 0$ in \eqref{eq:ind-a}, we obtain $\widetilde{v}\equiv$ constant. Thus $u^\star_{\xi\xi}\equiv A$, where $A$ is a constant. Similar to the proofs of Theorem \ref{thm:MA-d-hf} and Theorem \ref{thm:Liou-d-hf-ne}, by solving these ODEs, we have
$$u^\star(\xi,\eta)=\frac{A}{2}\xi^2-\frac{A}{(1+\alpha)(2+\alpha)}|\eta|^{2+\alpha}+B\xi\eta+l(\xi,\eta).$$
Again by $u=(u^\star)^\star$, we have \eqref{eq:s-MA-d}.
\end{proof}


\begin{thebibliography}{99999}

\bibitem[Ab]{Ab}
Abreu, M.,
K\"ahler geometry of toric varieties and extremal metrics.
\textit{Int. J. Math.} \textbf{9} (1998), no. 6, 641-651.

\bibitem[Ca]{Ca}
Calabi, E., 
Improper affine hyperspheres of convex type and a generalization of a theorem by K. J\"orgens.
\textit{Mich. Math. J.} \textbf{5} (1958), 105-126.

\bibitem[Caf]{Caf}
Caffarelli, L. A., 
\textit{Topics in PDEs: The Monge-Amp\`ere Equation, Graduate course}. 
Courant Institute, New York University, 1995.

\bibitem[CG]{CG}
Caffarelli, L. A., Guti\'errez, C. E., 
Properties of solutions of the linearized Monge-Amp\`ere equation.
\textit{Amer. J. Math.} \textbf{119} (1997), no. 2, 423-465.

\bibitem[Ch]{Ch}
Chern, S. S.:
Affine minimal hypersurfaces.
\textit{Minimal submanifolds and geodesics} (Proc. Japan-United States Sem., Tokyo, 1977), pp. 17-30, North-Holland, Amsterdam-New York, 1979.

\bibitem[CL]{CL}
Caffarelli, L. A., Li, Y. Y., 
An extension to a theorem of J\"orgens, Calabi, and Pogorelov. 
\textit{Comm. Pure Appl. Math.} \textbf{56} (2003), no. 5, 549-583.


\bibitem[CS]{CS}
Caffarelli, L., Silvestre, L., 
An extension problem related to the fractional Laplacian. 
\textit{Comm. Part. Diff. Eqns.} \textbf{32} (2007), 1245-1260.



\bibitem[Fi]{Fi}
Figalli, A., 
\textit{The Monge-Amp\`ere equation and its applications}. 
Zurich Lectures in Advanced Mathematics, European Mathematical Society (EMS), Z\"urich, 2017.




\bibitem[JL]{JL}
Jia, F., Li, A. M.,
A Bernstein property of some fourth order partial differential equations.
\textit{Results Math.} \textbf{56} (2009), 109-139.

\bibitem[JT]{JT}
 Jian, H. Y., Tu, X. S., 
A Liouville theorem for the Neumann problem of Monge-Amp\`ere equations. 
\textit{J. Funct. Anal.} \textbf{284} (2023), no. 6, Paper No. 109817.

\bibitem[JX]{JX}
Jin, T. L., Xiong, J. G., 
A Liouville theorem for solutions of degenerate Monge-Amp\`ere equations.
\textit{Comm. Part. Diff. Eqns.} \textbf{39} (2014), no. 2, 306-320.

\bibitem[J\"o]{Jo}
J\"orgens, K., \"Uber die L\"osungen der Differentialgleichung $rt - s^2 = 1$.
\textit{Math. Ann.} \textbf{127} (1954), 130-134.

\bibitem[Li]{Li}
Li, Y. Y.,
Remark on some conformally invariant integral equations: the method of moving spheres.
\textit{J. Eur. Math. Soc. (JEMS)} \textbf{6} (2004), no. 2, 153-180.


\bibitem[Po]{Po}
Pogorelov, A.V., 
On the improper affine hyperspheres.
\textit{Geom. Dedic.} \textbf{1} (1972), no. 1, 33-46.

\bibitem[Ru]{Ru}
Rubin, D.,
The Monge-Amp\`ere equation with Guillemin boundary conditions.
\textit{Cal. Var. Part. Diff. Eqns}. \textbf{54} (2015), 951-965.

\bibitem[S1]{S1}
Savin, O., 
Pointwise $C^{2,\alpha}$ estimates at the boundary for the Monge-Amp\`ere equation. 
\textit{J. Amer. Math. Soc.} \textbf{26} (2013), no. 1, 63-99.

\bibitem[S2]{S2}
Savin, O., 
A localization theorem and boundary regularity for a class of degenerate Monge-Amp\`ere equations.
\textit{J. Diff. Eqns.} \textbf{256} (2014), no. 2, 327-388.

\bibitem[SZ]{SZ}
Savin, O., Zhang, Q., 
Boundary regularity for Monge-Ampère equations with unbounded right hand side. 
\textit{Ann. Sc. Norm. Super. Pisa Cl. Sci. (5)} \textbf{20} (2020), no. 4, 1581-1619.

\bibitem[TW1]{TW1}
Trudinger, N. S., Wang, X. -J.,
The Bernstein problem for affine maximal hypersurfaces.
\textit{Invent. Math.} \textbf{140} (2000), no. 2, 399-422.

\bibitem [TW2] {TW2}
Trudinger, N. S., Wang, X. -J.,
Bernstein-J\"orgens theorem for a fourth order partial differential equation.
\textit{J. Part. Diff. Eqns.} {\textbf 15} (2002), no. 1, 78-88.



\bibitem[WZ]{WZ}
Wang, L., Zhu, M. J., 
Liouville theorems on the upper half space. 
\textit{Disc. Cont. Dyn. Syst.} \textbf{40} (2020), no. 9, 5373-5381. 

\bibitem[YD]{YD}
Yao, J. G., Dou, J. B.,
Classification of positive solutions to a divergent equation on the upper half space. 
\textit{Acta Math. Sin. (Engl. Ser.)} \textbf{38} (2022), no. 3, 499-509.

\bibitem[Z]{Z}
Zhou, B.,
The Bernstein theorem for a class of fourth order equations.
\textit{Calc. Var. Part. Diff. Eqns.} \textbf{43} (2012), no. 1-2, 25-44.

\end{thebibliography}
\end{document}